\theoremstyle{plain}
\newtheorem{theorem}{Theorem}[section]
\newtheorem{proposition}[theorem]{Proposition}
\newtheorem{lemma}[theorem]{Lemma}
\newtheorem{corollary}[theorem]{Corollary}
\theoremstyle{remark}
\newtheorem{remark}[theorem]{Remark}
\numberwithin{equation}{section}  %%%Number of equations by section
\DeclarePairedDelimiterX\intff[2]{[}{]}{#1,#2}
\DeclarePairedDelimiterX\intfo[2]{[}{)}{#1,#2}
\DeclarePairedDelimiterX\intof[2]{(}{]}{#1,#2}
\DeclarePairedDelimiterX\intoo[2]{(}{)}{#1,#2}
\DeclarePairedDelimiter{\pars}{(}{)}
\DeclarePairedDelimiter{\bracks}{[}{]}
\DeclarePairedDelimiter{\braces}{\lbrace}{\rbrace}
\DeclarePairedDelimiterX{\setof}[2]{\lbrace}{\rbrace}{#1\,{:}\,#2}
\DeclarePairedDelimiterX{\bracksof}[2]{[}{]}{#1\,\delimsize\vert\,#2}
\DeclarePairedDelimiterX{\parsof}[2]{(}{)}{#1\,\delimsize\vert\,#2}
\DeclarePairedDelimiterXPP\lnorm[2]{}\lVert\rVert{_{#1}}{#2}
\newcommand{\green}[1]{G_{#1}}%Green function
\newcommand{\rwS}[1]{S_{#1}}%RW
\newcommand{\rwD}{\eta}%RW displacement
\newcommand{\rwP}[2]{\mathsf P^{#1}_{#2}}%RW probability with displacement #1 starting at #2
\newcommand{\rwE}[2]{\mathsf E^{#1}_{#2}}%RW expectation with displacement #1 starting at #2
\newcommand{\rwVar}[1]{\Gamma_{#1}}%Covariance matrix of RW
\newcommand{\rwJ}[1]{J_{#1}}%J function of RW, see Lawler's book
\newcommand{\capa}{\mathtt{cap}_{\rwD}}%capacity under RW displacement
\newcommand{\brwCh}{\mu}%GW children distribution
\newcommand{\brwD}{\theta}%BRW displacement
\newcommand{\tree}{{\mathcal T}}%general tree
\newcommand{\gnNd}{u}%general node
\newcommand{\dist}{{\tt d}}%graph distance
\newcommand{\gnR}{R}%range
\newcommand{\brwNd}[1]{V_{{#1}}}%BRW
\newcommand{\hdX}[1]{V_{u_{#1}}}%old notation for \brwNd{\seq{i}}
\newcommand{\qP}[1]{\mathbb P_{#1}}%measure for the forest
\newcommand{\qE}[1]{\mathbb E_{#1}}
\newcommand{\hdP}{\qP{\brwCh,\brwD}}%old notation for \qP{\brwCh,\brwD}
\newcommand{\hdE}{\qE{\brwCh,\brwD}}%old notation for \qE{\brwCh,\brwD}
\newcommand{\spine}{w}%spine of the forest
\newcommand{\seq}[1]{u_{#1}}%DFS sequence
\newcommand{\qzeta}{\zeta}%Decomposition of dist(varnothing,u_i)
\newcommand{\qH}{H}
\begin{document}

\vglue30pt
\centerline{\large\bf Capacity of the range of branching random walks in low dimensions}

\bigskip
\bigskip

 \centerline{by}

\medskip

\centerline{Tianyi Bai  and Yueyun Hu\footnote{\scriptsize LAGA, Universit\'e Sorbonne Paris Nord, 99 avenue J-B Cl\'ement, F-93430 Villetaneuse, France.  
 Email: bai@math.univ-paris13.fr and  yueyun@math.univ-paris13.fr}}

\medskip

  \centerline{\it  Universit\'e Sorbonne Paris Nord}
 
\medskip
\centerline{\it dedicated to the 75th anniversary of Professor  Andrei M. Zubkov}
\centerline{\it  and the 70th anniversary of Professor  Vladimir A. Vatutin}

 \bigskip
\bigskip
 
{\leftskip=2truecm \rightskip=2truecm \baselineskip=15pt \small

\noindent{\slshape\bfseries Summary.}   Consider a branching random walk  $(V_u)_{u\in \tree^{IGW}}$ in $\mathbb Z^d$  with the genealogy tree $\tree^{IGW}$ formed by a sequence of i.i.d. critical Galton-Watson trees.  Let $\gnR_n $ be the set of points in $\mathbb Z^d$ visited by $(V_u)$ when the index $u$ explores  the first $n$ subtrees in $\tree^{IGW}$. Our main result states that for $d\in \{3, 4, 5\}$, the capacity of $\gnR_n$ is almost surely equal to $n^{\frac{d-2}{2}+o(1)}$ as $n \to \infty$.   
  \medskip

 \noindent{\slshape\bfseries Keywords. Branching random walk, tree-indexed random walk, capacity.}    \medskip
 
 \noindent{\slshape\bfseries 2010 Mathematics Subject Classification.} 60J80, 60J65.

} %%%%%% End of narrower

\bigskip

%\title{Capacity of the range of branching random walks in low dimensions}
%\author{Tianyi Bai, Yueyun Hu}
%\date{}

%\maketitle

\section{Introduction}

In this paper, we continue the study in \cite{bai2020capacity} on the capacity of the range of a branching random walk in $\mathbb Z^d$. 

Let $d\ge 3$ and $\rwD$ be a probability distribution on $\mathbb Z^d$. The  $\rwD$-{capacity} of a finite set $A\subset\mathbb Z^d$ (with respect to $\rwD$) is defined as
\[
\capa A:=\sum_{x\in A}\rwP{\rwD}{x}(\tau^+_A=\infty),
\]
where $\rwP{\rwD}{x}$ denotes the law of a (discrete) random walk $(\rwS{n})$ with jump distribution $\rwD$ started at $x$, and $\tau^+_A:=\inf\{n\geq 1: \rwS{n}\in A\}$ is {$(S_n)$'s} first returning time to $A$.

Let $\brwCh$ be a probability distribution on $\mathbb N$. A $\brwCh$-Galton-Watson tree starts with one initial ancestor %, denoted by $\varnothing$. This particle $\varnothing$ 
which produces a random number of children according to $\brwCh$, and these children form the first generation. Then particles in the first generation produce their children independently in the same way, forming the second generation. The system goes on until infinity, or until when there is no particle in a generation. In this paper, we are interested in the critical case, i.e. the case when $\sum_{k=0}^\infty k \brwCh(k)=1$. In this case, it is well-known that the Galton-Watson tree extincts (stops with no particle in finitely many generations) almost surely.  
To avoid extinction, we consider the Galton-Watson forest defined as follows. Let $(\tree_n)_{n\ge 0}$ be a sequence of independent $\mu$-Galton-Watson trees.  As showed in \Cref{fig0}, we start with a fixed infinite ray $(\spine_n)_{n\ge 0}$ called spine, and attach $\tree_n$ to  each $\spine_n$. For  every $n\ge 1$, $\spine_{n-1}$ is considered as the parent of $\spine_n$ and the whole forest is rooted at $\spine_0$ which we denote by $\varnothing$.  As  all $\tree_n$ are finite, this Galton-Watson forest is in fact an infinite rooted  tree, denoted  by $\tree^{IGW}$. Let $\qP{\brwCh}$ be the law of $\tree^{IGW}$.

Let  $\brwD$ be a probability distribution on $\mathbb Z^d$. Given a (finite or infinite) tree $\tree$, we can define a tree-indexed random walk $(\brwNd{\gnNd})_{\gnNd\in\tree}$ in $\mathbb Z^d$ as follows:  To all edges of $\tree$ we attach i.i.d. random variables which are  distributed as $\brwD$, independent of $\tree$.  Define $V_\varnothing:=0$.  For each $u \in \tree\backslash\{\varnothing\}$, let $V_u$ be the sum of those random variables which are attached to the edges in the (unique) simple path relating $u$ to the root $\varnothing$.  Clearly $\tree$ describes the genealogy of $(\brwNd{\gnNd})_{\gnNd\in\tree}$.   We may also call $(\brwNd{\gnNd})_{\gnNd\in\tree}$ a branching random walk when its genealogy  tree is a Galton-Watson tree (or forest). 

 Denote by  $\qP{\brwCh,\brwD}$ the law  of the branching  random walk $(\brwNd{\gnNd})_{\gnNd\in\tree^{IGW}}$ when  $\tree^{IGW}$ is the Galton-Watson forest distributed as $\qP{\brwCh}$.

\begin{figure}[ht]
\centering
\includegraphics[scale=0.8]{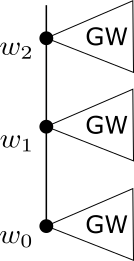}
\caption{The Galton Watson forest $\tree^{IGW}$.}\label{fig0}
\end{figure}

Under the measure $\qP{\brwCh,\brwD}$, let $\gnR_n:= \{V_u, u \in \cup_{j=0}^{n-1} \tree_j\}$ be the set of points in $\mathbb Z^d$ visited by the branching random walk $(V_u)$ when the index $u$ explores  the first $n$ subtrees of $\tree^{IGW}$. Our main result is:
\begin{theorem}\label{thm:mainlow0}
In dimensions $d=3,4,5$, let $\brwCh$ be a probability measure in $\mathbb N$, let $\brwD,\rwD$ be probability measures in $\mathbb Z^d$, with the conditions 
\begin{equation}\label{assumption}
\begin{aligned}
\left.
\begin{array}{lll}
{\mu\text{ has}}\text{ mean }1\text{ and finite variance, }{\text{and } \brwCh\not\equiv\delta_1},\\
\rwD \text{ is aperiodic, irreducible, with mean 0 and finite }(d+1) \text{-th moment},\\
\brwD \text{ is symmetric, irreducible, with some finite exponential moments.}
\end{array}
\right\}
\end{aligned}
\end{equation}
Then almost surely under $\qP{\brwCh,\brwD}$, as $n\rightarrow\infty$,
\[
\capa\gnR_n=n^{\frac{d-2}{2}+o_{\tt as}(1)},
\] where here and in the sequel,  $o_{\tt as}(1)$ denotes a quantity which converges to $0$ almost surely as $n \to \infty$. 
\end{theorem}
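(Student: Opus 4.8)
The guiding heuristic is that $\gnR_n$ fills a spatial ball of radius $n^{1/2+o(1)}$ thickly enough that, for the purpose of capacity, it behaves like the full ball; since the $\rwD$-capacity of a ball of radius $R$ is of order $R^{d-2}$, this predicts $\capa\gnR_n=n^{(d-2)/2+o(1)}$. I would reduce the theorem to three almost sure estimates, all monotone in $n$, and prove each along the geometric subsequence $n_k=2^k$, interpolating to all $n$ by the monotonicity of $n\mapsto\gnR_n$ (hence of $\capa\gnR_n$). Writing $M_n$ for the number of explored vertices and $W_n:=\sum_{u,v}G(V_u-V_v)$ for the Green energy of the occupation measure $\sum_u\delta_{V_u}$ (here $G$ is the Green's function of the $\rwD$-walk, with $G(z)\asymp|z|^{2-d}$ by the local limit theorem under the mean-zero, finite $(d{+}1)$-st moment hypotheses), the three ingredients are: (i) $M_n=n^{2+o_{\tt as}(1)}$; (ii) $\gnR_n\subseteq B(0,n^{1/2+o_{\tt as}(1)})$; and (iii) $W_n\le n^{4-(d-2)/2+o_{\tt as}(1)}$.

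\textbf{Vertex count and upper bound.} For (i), $M_n$ is a sum of $n$ i.i.d. total-progeny variables, each in the domain of attraction of the one-sided $\tfrac12$-stable law (tail $\asymp k^{-1/2}$, by criticality and finite variance), so $M_n=n^{2+o(1)}$ with failure probability summable along $n_k$. For (ii), the maximal genealogical height reached within the first $n$ trees is $n^{1+o(1)}$ (the spine contributes $n$, and the largest of $n$ critical tree-heights is $n^{1+o(1)}$ since $\qP{\brwCh}(\text{height}>h)\asymp 1/h$), while along any path of length $\ell$ the $\brwD$-displacement has Gaussian tails because $\brwD$ has exponential moments; a union bound over the $n^{2+o(1)}$ vertices confines them all to $B(0,n^{1/2+o(1)})$. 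Combining (ii) with the monotonicity of capacity and $\capa B(0,R)\asymp R^{d-2}$ gives the upper bound $\capa\gnR_n\le\capa B(0,n^{1/2+o(1)})=n^{(d-2)/2+o_{\tt as}(1)}$.

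\textbf{Lower bound and the role of $d\le 5$.} I would obtain the matching lower bound from the standard second-moment (energy) inequality $\capa\gnR_n\ge c\,M_n^2/W_n$, gotten by testing the equilibrium principle against $\sum_u\delta_{V_u}$ (the symmetrized Green's function may be used, which leaves $W_n$ unchanged by symmetry of the double sum). Together with (i) and (iii) this yields $\capa\gnR_n\ge n^{4-o(1)}/n^{4-(d-2)/2+o(1)}=n^{(d-2)/2-o_{\tt as}(1)}$, so everything rests on (iii). Conditionally on $\tree^{IGW}$, the increment $V_u-V_v$ is a $\brwD$-walk run for $\dist(u,v)$ steps, so $\hdE[G(V_u-V_v)\mid\tree^{IGW}]\asymp\dist(u,v)^{(2-d)/2}$ for $\dist(u,v)\ge1$ and is $O(1)$ on the diagonal, whence
\[
\hdE[W_n\mid\tree^{IGW}]\asymp M_n+\sum_{k\ge1}N_k\,k^{(2-d)/2},\qquad N_k:=\#\{(u,v):\dist(u,v)=k\}.
\]
With the typical profile $N_k\approx n^2k$ for $1\le k\le n^{1+o(1)}$, the contribution of scale $k$ is $\approx n^2k^{(4-d)/2}$; for $d\le 5$ the exponent satisfies $(4-d)/2>-1$, so the sum is dominated by the largest scale $k\approx n$ and equals $n^{4-(d-2)/2+o(1)}$, while the diagonal term $M_n$ and all small-$k$ (self-intersection and close-pair) terms are strictly lower order. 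This is exactly where $d\in\{3,4,5\}$ enters: at $d=6$ the sum spreads logarithmically over all scales, and for $d\ge 7$ the diagonal takes over, changing the order of magnitude.

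\textbf{Main obstacle.} The essential difficulty is that the \emph{annealed} energy is infinite, $\hdE[W_n]=\infty$ for every $d\le6$: rare, atypically large trees (the heavy $k^{-1/2}$ tail of the total progeny, together with the $k^{-1}$ tail of the height) make both $\hdE[W_n]$ and $\hdE[N_k]$ diverge, so one cannot simply average over the forest. Instead I would fix $\epsilon>0$ and work on a good event $\mathcal G_n=\{M_n\le n^{2+\epsilon},\ \text{height}\le n^{1+\epsilon},\ \#\{(u,v):\dist(u,v)\le K\}\le n^{2+\epsilon}K^2\ \forall K\}$, show $\qP{\brwCh}(\mathcal G_n^c)$ is polynomially small (hence summable along $n_k$), bound $\hdE[W_n\mid\tree^{IGW}]\le n^{4-(d-2)/2+\epsilon}$ on $\mathcal G_n$, and then control $W_n$ itself by a conditional Markov inequality (only an upper bound on $W_n\ge0$ is needed). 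The delicate ingredient, and the technical heart of the proof, is the uniform control of the pair-distance profile — equivalently, tail bounds on the sizes of the tree-balls $\#\{v:\dist(u,v)\le K\}$ in the Galton–Watson forest — that guarantees close pairs cannot dominate the energy; this is where the finite-variance offspring law and the explicit spine structure are used.
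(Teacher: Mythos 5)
Your plan reproduces the paper's skeleton in its technical core --- height/diameter control, the expected pair-distance profile $N_k\lesssim(\#\text{vertices})\cdot k$ under finite offspring variance (this is exactly \Cref{lm:count_pair}, proved on the good height event $F_\varepsilon(n)$ of \eqref{Fn}), Markov plus Borel--Cantelli along dyadic scales, and monotonicity to interpolate --- but your two endpoint tools genuinely differ from the paper's. For the upper bound the paper does not use containment in a ball together with $\mathtt{cap}_\eta B(0,R)\asymp R^{d-2}$; it uses the exact last-passage identity of \Cref{lm:GreenToCap}, which gives $\capa\gnR[0,n]\le (n+1)/\min_i\sum_j\green{\rwD}(V_{u_i},V_{u_j})$, and bounds the minimum from below using only the diameter (\Cref{lm:realG}); the input is the same diameter estimate, and your packaging is fine provided you justify monotonicity of $\eta$-capacity and the ball asymptotics (both follow from \Cref{Green_asymptotic}). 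For the lower bound the paper proceeds in two decoupled steps: $\#\gnR[0,n]\ge(n+1)^2/\sum_x(L_n^x)^2$ with the local-time second moment of \Cref{pp:caplocal}, and then \Cref{capA} (which holds for \emph{arbitrary} $\eta$) applied with set-indexed Green sums (\Cref{lm:green_upper}); notably, in $d=3$ the set-indexed sum must be bounded by ball containment rather than the pair profile (see the remark after \Cref{lm:green_upper}), whereas your multiplicity-weighted energy $W_n$ treats $d=3,4,5$ uniformly --- a real simplification if your inequality is available. Two smaller points: your claim that $\qE{\brwCh,\brwD}[W_n]=\infty$ for $d\le 6$ is true but for a different reason and in every dimension $d\ge3$ at the subtree scale (the total progeny of one critical tree has infinite mean and $W_n\ge \green{\rwD}(0)M_n$); the actual role of the truncation $F_\varepsilon(n)$ is to replace the awkward depth-first constraint $i,j\le n$ by a geometric height constraint that decouples via the branching property. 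Also, the uniform-in-$K$ ball-size control you place in $\mathcal G_n$ and call the technical heart is avoidable: as in the paper, it suffices to bound \emph{expected} pair counts on the height event and apply Markov to the whole energy.

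The one step that does not go through as written is the energy inequality $\capa A\ge c\,\nu(A)^2/\mathcal E(\nu)$ tested against $\nu=\sum_u\delta_{V_u}$: assumption \eqref{assumption} does \emph{not} make $\eta$ symmetric, and the Cauchy--Schwarz proof of the equilibrium-principle bound requires the Green kernel to be positive semidefinite. Symmetrizing the double sum in $W_n$ is free, as you note, but the last-exit identity $1=\sum_{x\in A}\green{\rwD}(z,x)\rwP{\rwD}{x}(\tau_A^+=\infty)$ involves the unsymmetrized kernel, so you cannot simply substitute $\hat G(x,y):=\tfrac12(\green{\rwD}(x,y)+\green{\rwD}(y,x))$ into it; the antisymmetric part does not cancel against the escape probabilities. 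This is fixable rather than fatal: $\hat G$ has nonnegative Fourier symbol $\mathrm{Re}\,(1-\widehat\eta(\xi))^{-1}\ge0$, hence is positive semidefinite, and since $\green{\rwD}$ and the Green function of the symmetrized step distribution are both $\asymp|x|^{2-d}$ by \Cref{Green_asymptotic}, a reversal/duality argument comparing equilibrium measures shows the two capacities agree up to multiplicative constants, which is harmless at the $n^{o(1)}$ level; alternatively one can prove a multiplicity-weighted version of \Cref{capA}. Without one of these repairs your lower bound is incomplete precisely in the generality the theorem claims; with it, your proposal is sound and arguably cleaner than the paper's two-step route in the lower bound.
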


\begin{remark} We need the finite second moment of $\brwCh$ in \Cref{lem:max_dist} and \Cref{lm:count_pair},  and use  the symmetry and finite exponential moments  of $\brwD$  in \Cref{cor:max_v}, \Cref{lm:realG} and \Cref{lm:green_upper}, whereas the finite $(d+1)$-th moment of $\rwD$ is needed in \Cref{Green_asymptotic}. $\hfill\Box$
%\begin{enumerate}
%\item The finite variance of $\brwCh$ is needed in \Cref{lem:max_dist} and \Cref{lm:count_pair}.
%\item The finite exponential moments and symmetricity for $\brwD$ is needed in \Cref{lm:realG}. 
%\item We need $\rwD$ to have finite $(d+1)$-th moment due to \Cref{Green_asymptotic}.
%\end{enumerate}
\end{remark}

A few comments are in order. First, it will be clear from our proof  that Theorem \ref{thm:mainlow0} holds when $\tree^{IGW}$ is replaced by a more general tree with one unique infinite ray, for example if we attach to each spine $\spine_i, i\ge 0$, an i.i.d. random number of independent $\mu$-Galton-Watson trees, as long as this random number has finite second moment. In particular Theorem \ref{thm:mainlow0} holds for the Kesten tree which is the $\mu$-Galton-Watson tree conditioned to survive forever if $\mu$ has finite third moment (because by the spine decomposition,  the number of children of $\spine_i$ in the Kesten tree has the size-biased law of $\mu$). %The same remark applies to the forthcoming Proposition \ref{thm:mainlow}.

Second, to avoid the extinction of a critical $\mu$-Galton-Watson tree $\tree$, we may condition $\tree$ to have $n$ vertices, thus we obtain a random tree, say $\tree_n^{cond}$. Let $R_n^{cond}:= \{V_u, u \in \tree_n^{cond}\}$ be the range of  $(\brwNd{\gnNd})_{\tree_n^{cond}}$  when the underlying genealogy tree is $\tree_n^{cond}$.  Le Gall and Lin \cite{LeGall-Lin-lowdim, LeGall-Lin-2016} studied in detail $\#R_n^{cond}$, the cardinality of the range $R_n^{cond}$,  and obtained various scaling limits for all dimensions. In particular, their results show that the critical dimension for the range of the tree-indexed walk is $d=4$:   for $d\ge 5$, $\#R_n^{cond}$ grows linearly whereas for $d=4$, $\#R_n^{cond}$ is sub-linear and for $d\le 3$, $\#R_n^{cond}$ is of order $n^{d/4}$.

The study of the capacity of the range $R_n^{cond}$ was initiated in \cite{bai2020capacity} where the authors  proved that $\capa R_n^{cond}$ grows linearly for $d\ge 7$ and is sub-linear for $d=6$. This suggests, also as conjectured in \cite{bai2020capacity},  that $d=6$ should be the critical dimension for the capacity of the range.    The main motivation of the present work is  to confirm this prediction, by giving the growth order of $\capa R_n^{cond}$ for $d\in \{3, 4, 5\}$, this will be  stated in the forthcoming Remark \ref{r:mainlow}, see \eqref{Rncond}. 

 At last, let us  mention the   systematical studies on the capacity of the range for  a simple random walk on $\mathbb Z^d$,  see Asselah, Schapira and Sousi \cite{ASS18} and the references therein. 
 
The rest of the paper is organized as follows: In Section 2, we order the vertices in the Galton-Watson forest $\tree^{IGW}$ and state the corresponding result for the range of the walk indexed by the   first $n$ vertices (Proposition \ref{thm:mainlow}).  Then \Cref{thm:mainlow0}   follows as a consequence of Proposition \ref{thm:mainlow} and  Lemma \ref{lm:population}. Sections 3 and 4 are devoted to the proofs of the upper and lower  bound of Proposition \ref{thm:mainlow} respectively.

{\bf Notation:}   Under  $\rwP{\brwD}{x}$ (resp: $\rwP{\rwD}{x}$), $(S_n)_{n\ge 0}$ denotes a random walk on $\mathbb Z^d$ starting from $x$ with  jump distribution   $\brwD$ (resp: $\rwD$). For brevity, we call $(S_n)$ a $\brwD$ (resp: $\rwD$)-random walk. Finally,  $C_i, 1\le i\le 12$ denote some positive constants.

\section{On the Galton-Watson forest}
It will be more convenient to study the capacity   for $n$ vertices  than $n$ subtrees, then we order the vertices in the Galton-Watson forest. On  $\tree^{IGW}$, we visit the vertices in the order illustrated in \Cref{fig1}:   starting with the first subtree $\tree_0$ rooted at $\spine_0$, one visits every vertex in the order of Depth-First Search (lexicographical order). Then we continue with the subtree $\tree_1$ rooted at $\spine_1$ and iterate the process. We denote the sequence of vertices in this order by $(\seq{i})_{i\ge 0}$.

\begin{figure}[ht]
\centering
\includegraphics[scale=0.5]{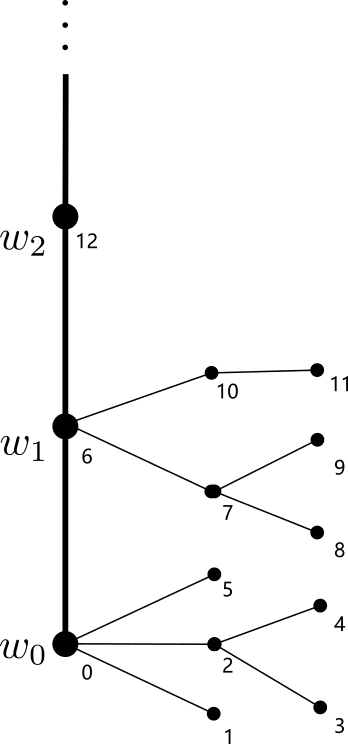}
\caption{\leftskip=1.8truecm \rightskip=1.8truecm A sample of the $\brwCh$-Galton Watson forest. The path in bold is the spine $(\spine_n)$. Labels correspond to the sequence $(\seq{i})$. For example, $u_0=\spine_0=\varnothing$ and $u_6=\spine_1$. }\label{fig1}
\end{figure}

Under the measure $\qP{\brwCh,\brwD}$, the sequence $(\seq{i})$ then induces a sequence of points in $\mathbb Z^d$, $(\brwNd{\seq{i}})$ the positions of $(\seq{i})$, and  we define
\[
\gnR[0,n]=\braces{\brwNd{\seq{0}},\brwNd{\seq{1}},\cdots,\brwNd{\seq{n}}}.
\]
The main part of this paper will be devoted to prove that
\begin{proposition}\label{thm:mainlow}
In dimensions $d=3,4,5$, let $\brwCh,\brwD,\rwD$ be probability distributions with the conditions \eqref{assumption}
Then almost surely under $\qP{\brwCh,\brwD}$,  
\[
\capa\gnR[0,n]=n^{\frac{d-2}{4}+o_{\tt as}(1)}.
\]
\end{proposition}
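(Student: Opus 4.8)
The plan is to prove matching upper and lower bounds, both with exponent $\frac{d-2}{4}$, exploiting the heuristic that the first $n+1$ vertices in depth-first order explore genealogical distances up to order $\sqrt n$, so that the spatial range $\gnR[0,n]$ is confined to a ball of radius $n^{1/4+o(1)}$ and, in these low dimensions, is ``dense enough'' inside that ball for its $\rwD$-capacity to match the capacity $\asymp r^{d-2}$ of the ball; substituting $r\approx n^{1/4}$ yields $n^{(d-2)/4}$ on both sides. Throughout I would pass from expectation bounds to almost sure statements by Markov's inequality and Borel--Cantelli along a geometric subsequence $n_m=2^m$, using that $\capa\gnR[0,n]$ is monotone under the inclusion $\gnR[0,n]\subset\gnR[0,n+1]$ and that the relevant maxima are nondecreasing in $n$, so that the $n^{o_{\tt as}(1)}$ error absorbs the interpolation between consecutive $n_m$.

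For the upper bound I would use monotonicity of capacity: since $\gnR[0,n]\subset B(0,M_n)$ with $M_n:=\max_{i\le n}\lvert\brwNd{\seq{i}}\rvert$, we have $\capa\gnR[0,n]\le\capa B(0,M_n)$, and the Green asymptotics $\green{\rwD}(x)\asymp\lvert x\rvert^{2-d}$ (\Cref{Green_asymptotic}) give classically $\capa B(0,r)\le C r^{d-2}$. It then remains to show $M_n\le n^{1/4+o_{\tt as}(1)}$. I would split this into controlling the maximal generation $H_n:=\max_{i\le n}\lvert\seq{i}\rvert$ among the first $n$ vertices, which is $n^{1/2+o_{\tt as}(1)}$ for the critical forest by the finite variance of $\brwCh$ (\Cref{lem:max_dist}), and then controlling the walk: conditionally on $\tree^{IGW}$, $\brwNd{\seq{i}}$ is a sum of $\lvert\seq{i}\rvert\le H_n$ i.i.d.\ $\brwD$-steps, so the finite exponential moment of $\brwD$ gives, by a union bound over the $\le n$ vertices and exponential Chebyshev, $M_n\le (H_n)^{1/2+o(1)}\le n^{1/4+o_{\tt as}(1)}$ (\Cref{cor:max_v}).

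For the lower bound I would use the energy method with the occupation measure $\nu(x):=\#\{0\le i\le n:\brwNd{\seq{i}}=x\}$, of total mass $n+1$ and supported on $\gnR[0,n]$. The classical energy lower bound for capacity gives
\[
\capa\gnR[0,n]\ \ge\ \frac{(n+1)^2}{D_n},\qquad D_n:=\sum_{i,j=0}^n \green{\rwD}(\brwNd{\seq{i}}-\brwNd{\seq{j}}),
\]
so it suffices to bound $D_n$ from above. Conditioning on $\tree^{IGW}$, the increment $\brwNd{\seq{i}}-\brwNd{\seq{j}}$ is a sum of $\dist(\seq{i},\seq{j})$ i.i.d.\ $\brwD$-steps, and combining the local limit estimates for $\brwD$ with the Green asymptotics for $\rwD$ yields $\hdE[\green{\rwD}(\brwNd{\seq{i}}-\brwNd{\seq{j}})\mid\tree^{IGW}]\le C\,\dist(\seq{i},\seq{j})^{(2-d)/2}$ (\Cref{lm:realG}, \Cref{lm:green_upper}). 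The problem thus reduces to the genealogical pair-counting estimate $\hdE[\sum_{i,j\le n}\dist(\seq{i},\seq{j})^{(2-d)/2}]\le n^{2-\frac{d-2}{4}+o(1)}$ (\Cref{lm:count_pair}); heuristically the number of pairs at genealogical distance $\approx k$ is $\approx nk$ by the quadratic volume growth of the critical tree, so $\sum_{i,j}\dist^{(2-d)/2}\approx n\sum_{k\le\sqrt n}k^{(4-d)/2}=n^{(10-d)/4}=n^{2-(d-2)/4}$, which for each $d\in\{3,4,5\}$ is dominated by the generic scale $k\approx\sqrt n$. Feeding this into the displayed inequality via Markov and Borel--Cantelli gives $\capa\gnR[0,n]\ge n^{(d-2)/4-o_{\tt as}(1)}$.

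I expect the genuine difficulty to lie in the lower bound, specifically the pair-counting estimate over the \emph{inhomogeneous} Galton--Watson forest. Unlike a single conditioned tree, $\tree^{IGW}$ is a spine carrying i.i.d.\ critical subtrees whose total progeny is heavy-tailed (tail $\asymp k^{-3/2}$), so among the first $n$ vertices one subtree typically has size of order $n$; one must check that the law of $\dist(\seq{i},\seq{j})$ (which for vertices in distinct subtrees runs through the spine) still produces the clean quadratic volume growth, and that $\hdE[D_n]$ is not inflated by the rare events on which a subtree is anomalously large or bushy --- this is exactly where the finite second moment of $\brwCh$ enters. Here the deterministic bound $D_n\le (n+1)^2\,\green{\rwD}(0)=O(n^2)$ is useful, since it shows the contribution of such bad events to $\hdE[D_n]$ is at most $Cn^2\,\hdP(\text{bad})$, which the tree estimates must render negligible. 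A secondary technical point is that the first-moment control only yields the bound with high probability, so the almost sure conclusion relies on the subsequence/monotonicity argument described above.
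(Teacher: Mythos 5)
Your proposal is correct in outline and reaches the right exponents, but your lower bound takes a genuinely different route from the paper's. For the upper bound, your ``monotonicity plus ball capacity'' argument ($\gnR[0,n]\subset B(0,M_n)$ with $M_n\le n^{\frac14+o_{\tt as}(1)}$ by \Cref{lem:max_dist} and \Cref{cor:max_v}, hence $\capa\gnR[0,n]\le C M_n^{d-2}$) is essentially the paper's argument in disguise: the paper instead runs the last-exit identity of \Cref{lm:GreenToCap} against the bound $\min_{i}\sum_{j}\green{\rwD}(\hdX{i},\hdX{j})\ge n^{\frac{6-d}{4}+o_{\tt as}(1)}$ of \Cref{lm:realG}, and both reduce to diameter control at scale $n^{1/4+o(1)}$; in fact you can skip ball monotonicity altogether, since the last-exit identity at any $z\in\gnR[0,n]$ gives $1\ge c\,M_n^{2-d}\,\capa\gnR[0,n]$ directly. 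The real divergence is the lower bound. The paper uses \Cref{capA}, which requires two separate inputs: a lower bound on $\#\gnR[0,n]$ obtained from the local-time second moment (\Cref{pp:caplocal}, \Cref{pp:caprange}), and an upper bound on the Green sum over \emph{distinct} points of the range (\Cref{lm:green_upper}), with a special volume argument in $d=3$ (there the occupation sum would be $n^{7/4}$, too large for \Cref{capA} against $\#\gnR[0,n]\approx n^{3/4}$). Your occupation-measure energy bound $\capa\gnR[0,n]\ge (n+1)^2/D_n$ bypasses the local-time lemma and the range-size proposition entirely and treats $d=3,4,5$ uniformly: \Cref{lm:count_pair} together with the conditional estimate $\hdE[\green{\rwD}(\hdX{i}-\hdX{j})\mid\tree^{IGW}]\le C(1+\dist(\seq{i},\seq{j}))^{\frac{2-d}{2}}$ (which holds for $d=3$ as well, by the same heat-kernel bounds \eqref{eq:pi1}--\eqref{eq:pi3} the paper uses) yields $D_n\le n^{\frac{10-d}{4}+o(1)}$, i.e.\ $n^{7/4+o(1)}$ in $d=3$, and the mass$^2$ numerator $(n+1)^2$ compensates the multiplicities so that $(n+1)^2/D_n= n^{\frac{d-2}{4}-o(1)}$ in all three dimensions. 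Your bad-event handling is also sound: the proof of \Cref{lem:max_dist} gives superpolynomial decay of $\qP{\brwCh}(F_\varepsilon(n)^c)$ (the relevant Lukasiewicz walk has jumps bounded below by $-1$, and the ladder-epoch estimate is stretched-exponential), so $n^2\,\qP{\brwCh}(F_\varepsilon(n)^c)$ is indeed negligible; alternatively one intersects with $F_\varepsilon(n)$ and uses that it holds a.s.\ eventually, as the paper does.

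The one step you should not wave through as ``classical'' is the energy inequality itself: under \eqref{assumption} the distribution $\rwD$ is \emph{not} assumed symmetric, and the textbook proof of $\capa A\ge \nu(A)^2/\mathcal E(\nu)$, with $\mathcal E(\nu):=\sum_{x,y}\nu(x)\green{\rwD}(x,y)\nu(y)$, uses Cauchy--Schwarz in the energy inner product and hence symmetry and positive-definiteness of the kernel. The inequality does survive, up to a harmless factor $2$, via a second-moment argument from infinity: with $W:=\sum_x\nu(x)\,\#\{t\ge0: S_t=x\}$ one has $\rwE{\rwD}{z}[W^2]\le\sum_{x,y}\nu(x)\nu(y)\bigl[\green{\rwD}(z,x)\green{\rwD}(x,y)+\green{\rwD}(z,y)\green{\rwD}(y,x)\bigr]$, while $\rwP{\rwD}{z}(W>0)=\sum_{x}\green{\rwD}(z,x)\rwP{\rwD}{x}(\tau^+_A=\infty)$ by the last-exit decomposition; letting $|z|\to\infty$ and using \Cref{Green_asymptotic} so that $\green{\rwD}(z,x)/\green{\rwD}(z,x')\to1$ uniformly on the finite set $A$, and noting that both $\sum\nu\nu\,\green{\rwD}(x,y)$ and $\sum\nu\nu\,\green{\rwD}(y,x)$ equal $\mathcal E(\nu)$, one gets $\capa A\ge \nu(A)^2/(2\mathcal E(\nu))$. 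This derivation must be included because the step is load-bearing: if you tried to fall back on the paper's \Cref{capA} instead, your $D_n$ bound alone only yields $\#\gnR[0,n]\ge (n+1)^2/D_n\approx n^{1/4}$ in $d=3$ (via $\sum_x(L^x_n)^2\le D_n/\green{\rwD}(0)$), which fed into \Cref{capA} against the occupation Green sum $n^{7/4}$ produces a useless bound; making \Cref{capA} work in $d=3$ requires exactly the sharper inputs ($\#\gnR[0,n]\ge n^{3/4-o(1)}$ and the distinct-point Green sum $n^{5/4+o(1)}$) that your route was designed to avoid.
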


\begin{remark}\label{r:mainlow} As for Theorem \ref{thm:mainlow0}, Proposition \ref{thm:mainlow} also holds for more general trees with one unique infinite ray:   if we attach to each $\spine_i$ an i.i.d. random number $\nu_i$ of $\mu$-Galton-Watson tree, then the same conclusion holds as long as $\qE{\brwCh,\brwD}[\nu_i^2] < \infty$.  The proof follows in the same way as that of Proposition \ref{thm:mainlow} and we skip the details. 

  Now let $R_n^{cond}:= \{V_u, u \in \tree_n^{cond}\}$ be as before  the range of  $(\brwNd{\gnNd})_{\tree_n^{cond}}$,   where  $\tree_n^{cond}$ is the $\mu$-Galton-Watson tree conditioned to have $n$ vertices. Assume  \eqref{assumption} and furthermore that $\mu$ has finite third moment, then in probability \begin{equation}\label{Rncond}
\capa R_n^{cond}=n^{\frac{d-2}{4}+o_{\tt p}(1)},
\end{equation} where $o_{\tt p}(1)$ denotes  a quantity which converges to $0$ in probability as $n \to \infty$.  The conclusion \eqref{Rncond} follows from the aforementioned generalized version of Proposition \ref{thm:mainlow} with $\qP{\brwCh,\brwD}(\nu_i=k)= \sum_{j=k+1}^\infty\mu(j), k\ge 0,$ and   the arguments in Zhu \cite{zhu21}, Section 5 for the coupling between the  infinite tree model and $\tree_n^{cond}$.  Indeed, we first observe that $\nu_i$ has finite second moment thanks to the assumption on $\mu$. Fix $0< a <1$. By the generalized version of \Cref{thm:mainlow} (with $\nu_i$) and \cite[Lemma 3.6]{bai2020capacity}, we have  
\begin{equation*}
\capa R_n^{cond}[0,\lfloor an\rfloor]=n^{\frac{d-2}{4}+o_{\tt p}(1)},
\end{equation*}
where $R_n^{cond}[0,\lfloor an\rfloor]$ is the  range of  $(\brwNd{\gnNd})_{\tree_n^{cond}}$ when $u$ runs over the first $1+\lfloor an\rfloor$ vertices  of $\tree_n^{cond}$ in the lexicographical order. This gives a lower bound of \eqref{Rncond}, 
\[\capa R_n^{cond}\ge \capa R_n^{cond}[0,\lfloor an\rfloor]=n^{\frac{d-2}{4}+o_{\tt p}(1)}.\] 
Moreover, by exploring the tree $\tree_n^{cond}$ in the reversed order, we get  that  
\begin{equation*}
\capa R_n^{cond}[\lfloor an\rfloor,n]=n^{\frac{d-2}{4}+o_{\tt p}(1)},
\end{equation*}
yielding the upper bound because 
$\capa R_n^{cond}\le \capa R_n^{cond}[0,\lfloor an\rfloor]+\capa R_n^{cond}[\lfloor an\rfloor,n]=n^{\frac{d-2}{4}+o_{\tt p}(1)}.$ $\hfill\Box$
\end{remark}
%Having Proposition \ref{thm:mainlow} (or more precisely its generalized version with $\nu_i$) in mind,   applying \cite{bai2020capacity}, Lemma 3.6 gives the lower bound in \eqref{Rncond}. The upper bound can be deduced as in Zhu \cite{zhu21}, Section 5, by exploring the $\mu$-Galton-Watson tree from the ``left'' side the first $\lfloor an\rfloor$ vertices then from the ``right'' side the first $n- \lfloor a n\rfloor$ vertices, and we apply twice Proposition \ref{thm:mainlow}. 

Admitting Proposition \ref{thm:mainlow},  we deduce \Cref{thm:mainlow0}  from  the following lemma.
\begin{lemma}\label{lm:population}
Let $\brwCh\not\equiv\delta_{1}$ be a probability measure on $\mathbb N$ with mean $1$ and finite variance, then $\qP{\brwCh}$-almost surely, there are $n^{2+o_{\tt as}(1)}$ vertices in the first $n$ subtrees rooted at $\spine_0,\cdots,\spine_{n-1}$.
\end{lemma}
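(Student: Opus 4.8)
The plan is to reduce the statement to a tail estimate for the total progeny of a single critical tree. Write $N_n:=\sum_{j=0}^{n-1}\#\tree_j$ for the number of vertices in the first $n$ subtrees rooted at $\spine_0,\dots,\spine_{n-1}$, where $\#\tree_j$ is the total progeny of $\tree_j$. Since the $\tree_j$ are i.i.d. and each contains at least its root, $N_n$ is a non-decreasing sum of $n$ i.i.d. copies of $\#\tree_0$, and the claim $N_n=n^{2+o_{\tt as}(1)}$ is exactly $\tfrac{\log N_n}{\log n}\to 2$ almost surely. The single input I need is the classical two-sided tail bound for a critical Galton--Watson tree with mean $1$ and finite variance $\sigma^2\in(0,\infty)$ (positivity of $\sigma^2$ being forced by $\brwCh\not\equiv\delta_1$): there are constants $0<c_1\le c_2<\infty$ with
\[
c_1\,k^{-1/2}\le \qP{\brwCh}(\#\tree_0\ge k)\le c_2\,k^{-1/2},\qquad k\ge 1.
\]
This follows from the Otter--Dwass identity $\qP{\brwCh}(\#\tree_0=k)=\tfrac1k\,\mathbb P(\xi_1+\cdots+\xi_k=k-1)$, with $\xi_i$ i.i.d. of law $\brwCh$, combined with the local central limit theorem for the centred walk $\sum_i(\xi_i-1)$; I would simply cite a standard reference for it.

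For the lower bound I would discard all but the largest subtree. Fixing $\varepsilon>0$ and setting $t=n^{2-\varepsilon}$, independence and the lower tail bound give
\[
\qP{\brwCh}(N_n<t)\le\qP{\brwCh}\Big(\max_{0\le j<n}\#\tree_j<t\Big)=\big(1-\qP{\brwCh}(\#\tree_0\ge t)\big)^n\le \exp\!\big(-c_1\,n\,t^{-1/2}\big)=\exp\!\big(-c_1\,n^{\varepsilon/2}\big),
\]
which is summable in $n$; Borel--Cantelli then yields $N_n\ge n^{2-\varepsilon}$ for all large $n$, almost surely.

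For the upper bound I would truncate at level $a:=n^{2+\varepsilon}$. Writing $Y_j:=\#\tree_j\wedge a$, a union bound on the event that some $\#\tree_j$ exceeds $a$, together with Markov's inequality applied to the truncated sum, gives
\[
\qP{\brwCh}(N_n\ge a)\le n\,\qP{\brwCh}(\#\tree_0>a)+\frac{n\,\qE{\brwCh}[Y_0]}{a}.
\]
The upper tail bound makes the first term $\le c_2\,n\,a^{-1/2}=c_2\,n^{-\varepsilon/2}$, and since $\qE{\brwCh}[Y_0]=\sum_{k=0}^{a-1}\qP{\brwCh}(\#\tree_0>k)\le C\,a^{1/2}$ the second term is also $O(n^{-\varepsilon/2})$. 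This bound is not summable in $n$, so I would apply Borel--Cantelli only along the dyadic subsequence $n_\ell=2^\ell$, where $\sum_\ell \qP{\brwCh}(N_{n_\ell}\ge n_\ell^{2+\varepsilon})\le C\sum_\ell 2^{-\ell\varepsilon/2}<\infty$, to get $N_{n_\ell}<n_\ell^{2+\varepsilon}$ eventually. Monotonicity of $N_n$ then interpolates: for $n_\ell\le n<n_{\ell+1}$ one has $N_n\le N_{n_{\ell+1}}<n_{\ell+1}^{2+\varepsilon}\le 2^{2+\varepsilon}n^{2+\varepsilon}$, so $\limsup_n \tfrac{\log N_n}{\log n}\le 2+\varepsilon$. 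Intersecting the two bounds over $\varepsilon=1/m$, $m\ge1$, gives $\tfrac{\log N_n}{\log n}\to 2$ almost surely.

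The delicate point is the upper bound. The lower bound is immediate because the maximum of $n$ heavy-tailed terms already reaches order $n^2$ with a stretched-exponentially high probability, so raw Borel--Cantelli applies. For the upper bound the relevant single-scale probability $\qP{\brwCh}(N_n\ge n^{2+\varepsilon})$ only decays like $n^{-\varepsilon/2}$ and is not summable, which reflects, at the level of tail bounds, the ``single big jump'' phenomenon (the sum of $n$ terms with a $k^{-1/2}$ tail is of the same order $n^2$ as its maximum). I would circumvent this by the standard combination of a geometric subsequence with the monotonicity of $n\mapsto N_n$; the truncation step is precisely what turns the single-big-jump heuristic into a rigorous upper tail estimate.
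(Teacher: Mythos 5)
Your proof is correct, but it takes a genuinely different route from the paper's. The paper encodes the forest by its Lukasiewicz walk: by \cite[Section 0.2]{heightbook}, the total progeny of the first $n$ subtrees equals the first-passage time $\inf\{k\ge 1: Y_k=-n\}$ of a centred finite-variance walk $Y$, and the lemma is read off by inverting the fluctuation estimate $-\min_{0\le k\le n}Y_k=n^{\frac12+o_{\tt as}(1)}$, obtained from Khintchine's law of the iterated logarithm (upper fluctuation) and Hirsch's law in Cs\'aki's second-moment form \cite{Csaki78} (lower fluctuation). You instead work directly with the i.i.d.\ decomposition $N_n=\sum_{j=0}^{n-1}\#\tree_j$ and the classical Otter--Dwass tail $\qP{\brwCh}(\#\tree_0\ge k)\asymp k^{-1/2}$: the largest summand alone yields the summable lower-tail bound $\exp(-c_1 n^{\varepsilon/2})$, while truncation at $n^{2+\varepsilon}$ plus Markov, a dyadic subsequence, and monotonicity of $n\mapsto N_n$ handle the merely polynomially decaying upper tail. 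Your execution is sound, including the observations that $\brwCh\not\equiv\delta_1$ together with mean $1$ forces positive offspring variance, and your closing remark correctly identifies the single-big-jump phenomenon as the reason the upper bound needs the subsequence device while the lower bound does not; the one point worth a sentence in a write-up is that the local limit theorem behind the Otter--Dwass asymptotics may only hold along a sublattice when $\brwCh$ is periodic, which does not affect the two-sided tail bound you actually use. The two arguments are of course cousins---Otter--Dwass is itself proved from the same Lukasiewicz walk, so your tail estimate repackages the first-passage fluctuations the paper exploits. What each buys: the paper's LIL route is a two-line reduction given the cited references and yields the precise polylogarithmic corrections for free; your route is more elementary and self-contained modulo one standard tail estimate, at the cost of the truncation and subsequence bookkeeping.
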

\begin{proof} Denote by $\#\tree$ the total vertices of a finite tree $\tree$. It is well-known (see \cite{heightbook}, Section 0.2) that there exists a random walk $Y$ on ${\mathbb Z}$ with $Y_0=0$ and jump distribution $\qP{\brwCh}(Y_1=k)= \mu(k+1)$ for $k=-1, 0, 1, 2, ...$, such that \begin{equation}\label{rwY} \# \tree_0+...+ \# \tree_{n-1}= \inf\{k\ge 1: Y_k=-n\}, \qquad n\ge 1. \end{equation}

Observe that $\qE{\brwCh}(Y_1)=0$ and $\mbox{Var}(Y_1)\in (0, \infty)$. By the classical Khintchine and Hirsch laws of iterated logarithm for the random walk $(Y_k)$ (see Cs\'aki \cite{Csaki78} for Hirsch's law of iterated logarithm under the second moment assumption), $$ - \min_{0\le k\le n} Y_k = n^{\frac12+ o_{\tt as}(1)}, \qquad \mbox{a.s.}$$

\noindent It follows that $\# \tree_0+...+ \# \tree_{n-1}= n^{2+o_{\tt as}(1)}$ a.s. 
\end{proof}

The rest of the paper is devoted to the proof of \Cref{thm:mainlow}. At first,  we need the following estimates on the population of the Galton-Watson forest $\tree^{IGW}$. For any $u, v \in \tree^{IGW}$, let $\dist(u,v)$ be the graph distance between $u$ and $v$. %, that is the number of vertices in the shortest path relating $u$ and $v$.

\begin{lemma}\label{lem:max_dist}
Let $\brwCh\not\equiv\delta_{1}$ be a probability measure on $\mathbb N$ with mean $1$ and finite variance, then $\qP{\brwCh}$-almost surely,
\[
\max_{0\le i\le n}\dist(\varnothing,\seq{i})= n^{\frac{1}{2}+o_{\tt as}(1)}.
\]
\end{lemma}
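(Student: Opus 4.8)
The plan is to split the distance from the root into a contribution along the spine and a contribution inside a single subtree, and then to control the first via \Cref{lm:population} and the second via Kolmogorov's estimate for the height of a critical Galton--Watson tree.

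Every vertex $\seq{i}$ belongs to a unique subtree $\tree_j$, and by construction $\dist(\varnothing,\seq{i})=j+h_j(\seq{i})$, where $j=\dist(\varnothing,\spine_j)$ is the length of the spine segment and $h_j(\seq{i})\le H_j$ is the height of $\seq{i}$ inside $\tree_j$, with $H_j$ the total height of $\tree_j$. Set $N_m:=\#\tree_0+\dots+\#\tree_{m-1}$ and let $J(n):=\max\{m:N_m\le n\}$ be the index of the subtree being explored at time $n$. By \Cref{lm:population}, $N_m=m^{2+o_{\tt as}(1)}$; since $m\mapsto N_m$ is strictly increasing (each $\#\tree_j\ge 1$) and $J(n)\to\infty$, inverting $N_{J(n)}\le n< N_{J(n)+1}$ gives $\log n=(2+o_{\tt as}(1))\log J(n)$, i.e. $J(n)=n^{1/2+o_{\tt as}(1)}$ almost surely.

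The lower bound is then free: the spine vertex $\spine_{J(n)}$ is among the first $n+1$ visited vertices and lies at distance $J(n)$ from $\varnothing$, so $\max_{0\le i\le n}\dist(\varnothing,\seq{i})\ge J(n)=n^{1/2+o_{\tt as}(1)}$. For the upper bound I would use $\max_{0\le i\le n}\dist(\varnothing,\seq{i})\le J(n)+\max_{0\le j\le J(n)}H_j$, the partial exploration of the last subtree only decreasing the right-hand side. It then remains to show that $A_M:=\max_{0\le j\le M}H_j$ satisfies $A_M=M^{1+o_{\tt as}(1)}$. Since $\brwCh$ is critical with $\brwCh\not\equiv\delta_1$, its variance is $\sigma^2\in(0,\infty)$, and Kolmogorov's estimate for the height $H$ and generation sizes $(\gnZ{k})$ of a single $\brwCh$-Galton--Watson tree gives $\qP{\brwCh}(H\ge k)=\qP{\brwCh}(\gnZ{k}>0)\sim \tfrac{2}{\sigma^2 k}$, hence $\qP{\brwCh}(H\ge k)\le c/k$ for a constant $c>0$. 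A union bound over the $M+1$ independent subtrees yields $\qP{\brwCh}(A_M\ge M^{1+\varepsilon})\le (M+1)\,c\,M^{-(1+\varepsilon)}\le c'M^{-\varepsilon}$.

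This tail is not summable in $M$, so the main point is to upgrade it to an almost-sure bound: I would apply Borel--Cantelli along the geometric subsequence $M_l=2^l$ and then exploit the monotonicity of $M\mapsto A_M$ to interpolate, obtaining that for each fixed $\varepsilon>0$ one has $A_M\le c''M^{1+\varepsilon}$ for all large $M$, almost surely. Feeding in $J(n)=n^{1/2+o_{\tt as}(1)}$ gives, almost surely for all large $n$, $\max_{0\le i\le n}\dist(\varnothing,\seq{i})\le J(n)+A_{J(n)}\le n^{1/2+\varepsilon}+c''\,n^{(1/2+\varepsilon)(1+\varepsilon)}=n^{1/2+O(\varepsilon)}$; letting $\varepsilon\downarrow 0$ along a countable sequence and intersecting the corresponding almost-sure events completes the upper bound, and together with the lower bound proves the lemma. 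The principal obstacle is precisely this passage from an in-probability to an almost-sure statement --- the non-summable tail forces the subsequence-and-monotonicity argument --- together with the bookkeeping needed to merge the two independent $o_{\tt as}(1)$ errors (from the population count and from the height maximum) into a single $n^{1/2+o_{\tt as}(1)}$.
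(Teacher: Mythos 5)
Your proof is correct, and while it shares the paper's basic decomposition (spine length plus height inside a subtree, with the spine part controlled by \Cref{lm:population} exactly as you do), it handles the height term by a genuinely different key estimate. The paper bounds $\max_{0\le i\le n}H_i$, where $H_i=\dist(\seq{i},\spine_{\qzeta_i})$ is the height of the \emph{current} vertex, by viewing $(H_n)$ as the height process of the forest in the Duquesne--Le Gall random-walk representation: after time reversal, $H_n$ counts weak ascending ladder epochs of the walk $Y$, and the Sparre--Andersen identity together with the Kersting--Vatutin asymptotic $1-\qE{\brwCh}(e^{-\lambda {\tt t}_1})\sim C_1\sqrt{\lambda}$ yields the stretched-exponential tail $\qP{\brwCh}(H_n\ge n^{1/2}(\log n)^2)\le e^{1-C_2(\log n)^2}$, so a union bound over $n$ terms is directly summable and Borel--Cantelli applies along the full sequence. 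You instead majorize by the \emph{total} heights of the roughly $n^{1/2}$ explored subtrees and invoke the Kolmogorov (Kesten--Ney--Spitzer, valid under finite variance) survival estimate $\qP{\brwCh}(H\ge k)=\qP{\brwCh}(\gnZ{k}>0)\sim 2/(\sigma^2 k)$, which gives only a polynomial, non-summable tail $\qP{\brwCh}(A_M\ge M^{1+\varepsilon})\le c'M^{-\varepsilon}$; you correctly recognize that this forces the dyadic-subsequence Borel--Cantelli plus monotone interpolation upgrade, and that step, together with the substitution $M=J(n)$ (legitimate since $J(n)\to\infty$ a.s. and the bound holds for all large $M$), closes the argument. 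The trade-off: your route is more elementary and self-contained, resting on a classical survival estimate rather than ladder-epoch Laplace transform asymptotics, but it produces only $n^{1/2+\varepsilon}$, whereas the paper's method gives the polylog-sharp bound $n^{1/2}(\log n)^2$; for the lemma's $n^{\frac12+o_{\tt as}(1)}$ conclusion both are equally sufficient. (One cosmetic remark: you only need and only prove the upper bound $A_M\le M^{1+o_{\tt as}(1)}$, not the two-sided claim $A_M=M^{1+o_{\tt as}(1)}$ you state in passing; the lower bound for the lemma comes from the spine alone, as in the paper.)
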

\begin{proof}
Let 
\begin{equation}
\qzeta_n=\max\setof{k\ge 0}{\spine_{k}\in\gnR[0,n]},\qquad\qH_n=\dist(\seq{n},\spine_{\qzeta_n}), \label{zeta}
\end{equation}
\begin{figure}[ht]
\centering
\includegraphics[scale=0.5]{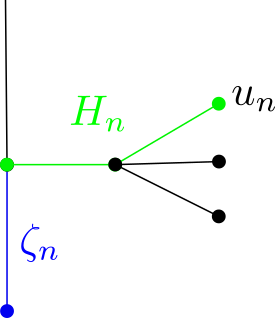}
\caption{\leftskip=1.8truecm \rightskip=1.8truecm The decomposition $\dist(\varnothing,\seq{n})=\qzeta_n+\qH_n$.}\label{figHzeta}
\end{figure}
then as showed in \Cref{figHzeta}, we have
\[
\dist(\varnothing,\seq{n})=\qzeta_n+\qH_n,\qquad \forall n\ge 0.
\]

By \Cref{lm:population}, we have 
\[\qzeta_n= n^{\frac{1}{2}+o_{\tt as}(1)}.\]

\noindent 
It thus suffices to show that $\qP{\brwCh}$-almost surely, \begin{equation}\label{eq:upper_H}
\max_{0\le i\le n}\qH_i\le n^{\frac{1}{2}+o_{\tt as}(1)}.
\end{equation}

%The first assertion \eqref{eq:upper_zeta} follows from \Cref{lm:population}. % For future use we observe that by \eqref{rwY}, $\qzeta_n=  |\min_{0\le k \le n} Y_k|$. It is easy to see, for instance by using the Skorokhold embedding of $(Y_k)$ in a one-dimensional Brownian motion, that there exists some positive constant $c_1$ such that for all $n\ge 1$,  $\qE{\brwCh} [(\min_{0\le k \le n} Y_k)^2] \le c_1 n$. Then  \begin{equation}\label{zeta2} \qE{\brwCh} [\qzeta_n^2] \le c_1 n , \qquad \forall n\ge 1. \end{equation}

Note that the process $(\qH_n)$ is distributed as the height process in the sense of \cite[Section 0.2]{heightbook}: Using the random walk $(Y_k)$ introduced in the proof of Lemma \ref{lm:population}, we have $$ H_n= \sum_{k=0}^{n-1} 1_{\{Y_k= \min_{k\le j \le n} Y_j\}}, \qquad n\ge1.$$

For any fixed $n$, by considering $Y_n-Y_{n-k}, 0\le k \le n$, we see that $H_n$ is distributed as $\sum_{k=1}^n 1_{\{Y_k= \max_{0\le j\le k} Y_k\}}.$ In other words,  let ${\tt t_0}:=0$ and for $j\ge 1$, ${\tt t_j}:=\inf\{k> {\tt t}_{j-1}: Y_k \ge Y_{{\tt t}_{j-1}}\}$ be the sequence of (weak) ascending  ladder epochs of $Y$. Then  for all $n, \ell \ge 1$, $$ \qP{\brwCh}(H_n\ge \ell)
= \qP{\brwCh}({\tt t}_\ell \le n)
\le
\inf_{\lambda >0} e^{\lambda n} \big(\qE{\brwCh}(e^{-\lambda {\tt t}_1})\big)^\ell,$$

\noindent where in the above inequality we have used the fact that ${\tt t}_k- {\tt t}_{k-1}, k\ge 1$ are i.i.d and distributed as ${\tt t}_1$.  The Laplace transform of $\qE{\brwCh}(e^{-\lambda {\tt t}_1})$ can be computed by the Sparre-Anderson identity,  whose asymptotic is given by  Kersting and Vatutin (\cite{Kersting-Vatutin},   proof of     Theorem 4.6, Page 75):  $$  1-  \qE{\brwCh}(e^{-\lambda {\tt t}_1}) \sim  C_1 \sqrt{\lambda}, \qquad \lambda \to 0. $$

 Take $\lambda= \frac1{n}$ we see that for all $n\ge 1$,  $\qP{\brwCh}(H_n\ge n^{\frac12}(\log n)^2) \le   e^{1 - C_2 (\log n)^2}$. It follows that   $\qP{\brwCh}(\max_{1\le k \le n} H_k \ge n^{\frac12}(\log n)^2) \le  n \,  e^{1 -C_2 (\log n)^2}$ whose sum over $n$ converges. We get \eqref{eq:upper_H}  by the Borel-Cantelli lemma. \end{proof}

\begin{corollary}\label{cor:max_v}
Let $\brwCh,\brwD$ be probability measures satisfying \eqref{assumption}, then $\qP{\brwCh,\brwD}$-almost surely,
\[
\max_{0\le i\le n}|\brwNd{\seq{i}}|= n^{\frac{1}{4}+o_{\tt as}(1)}.
\]
\end{corollary}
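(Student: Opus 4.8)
The plan is to use that each position $\brwNd{\seq{i}}$ is the endpoint of a $\brwD$-random walk run for $\dist(\varnothing,\seq{i})$ steps, together with the fact, already supplied by \Cref{lem:max_dist}, that $\max_{0\le i\le n}\dist(\varnothing,\seq{i})=n^{\frac12+o_{\tt as}(1)}$. Since a $\brwD$-walk of length $\ell$ has typical displacement of order $\sqrt\ell$, a maximal path length $n^{1/2+o(1)}$ should produce a maximal displacement $n^{1/4+o(1)}$. I would establish the matching upper and lower bounds separately, each up to an $n^{o(1)}$ factor.

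For the upper bound I would argue conditionally on $\tree^{IGW}$. Let $b_n:=n^{1/2}(\log n)^{C}$ be the deterministic bound for $\max_{0\le i\le n}\dist(\varnothing,\seq{i})$ coming from the proof of \Cref{lem:max_dist}, so that the $\tree^{IGW}$-measurable event $F_n:=\{\max_{0\le i\le n}\dist(\varnothing,\seq{i})\le b_n\}$ satisfies $\sum_n \qP{\brwCh}(F_n^c)<\infty$. On $F_n$ each $\brwNd{\seq{i}}$ with $i\le n$ is a sum of at most $b_n$ i.i.d.\ copies of $\brwD$; since $\brwD$ has mean zero and finite exponential moments, a Bernstein/Chernoff estimate gives, coordinatewise,
\[
\qP{\brwCh,\brwD}\big(|\brwNd{\seq{i}}|\ge a_n \,\big|\, \tree^{IGW}\big)\le 2d\,e^{-c\min(a_n^2/b_n,\,a_n)}\qquad\text{on }F_n,
\]
where $a_n:=n^{1/4}(\log n)^{K}$ and $c>0$. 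Choosing $K$ large enough that $a_n^2/b_n\ge(\log n)^2$, the right-hand side is at most $2d\,e^{-c(\log n)^2}$, so a union bound over the $n+1$ vertices yields
\[
\qP{\brwCh,\brwD}\Big(\max_{0\le i\le n}|\brwNd{\seq{i}}|\ge a_n\Big)\le (n+1)\,2d\,e^{-c(\log n)^2}+\qP{\brwCh}(F_n^c),
\]
which is summable. Borel--Cantelli then gives $\max_{0\le i\le n}|\brwNd{\seq{i}}|\le a_n=n^{1/4+o_{\tt as}(1)}$.

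For the lower bound I would restrict to the spine. By the definition of $\qzeta_n$ in \eqref{zeta}, the spine vertices $\spine_0,\dots,\spine_{\qzeta_n}$ all belong to $\{\seq{0},\dots,\seq{n}\}$, whence $\max_{0\le i\le n}|\brwNd{\seq{i}}|\ge \max_{0\le k\le \qzeta_n}|\brwNd{\spine_k}|$. Now $(\brwNd{\spine_k})_{k\ge 0}$ is a genuine $\brwD$-random walk, its increments being the i.i.d.\ displacements attached to the spine edges. As $\brwD$ is irreducible its covariance matrix is non-degenerate, so the first coordinate $\tilde S_k$ of $\brwNd{\spine_k}$ is a mean-zero, finite-variance, positive-variance one-dimensional walk. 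Applying Hirsch's law of the iterated logarithm (Cs\'aki \cite{Csaki78}), exactly as used for $-\min_k Y_k$ in the proof of \Cref{lm:population}, gives $\max_{0\le k\le m}\tilde S_k\ge m^{1/2-o_{\tt as}(1)}$ for all large $m$, so $\max_{0\le k\le m}|\brwNd{\spine_k}|\ge m^{1/2-o_{\tt as}(1)}$. Since $\qzeta_n=n^{1/2+o_{\tt as}(1)}$ was established in the proof of \Cref{lem:max_dist} and the running maximum is monotone in $m$, taking $m=\qzeta_n$ gives $\max_{0\le i\le n}|\brwNd{\seq{i}}|\ge \qzeta_n^{1/2-o(1)}=n^{1/4-o_{\tt as}(1)}$. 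Combining the two bounds yields $\max_{0\le i\le n}|\brwNd{\seq{i}}|=n^{1/4+o_{\tt as}(1)}$.

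The main delicate point will be the lower bound: the single endpoint $\brwNd{\spine_{\qzeta_n}}$ can be atypically small, so one cannot argue from one vertex, and it is precisely the Chung/Hirsch-type liminf law of the iterated logarithm (rather than the ordinary Khintchine LIL) that forces the \emph{running maximum} of the spine walk to be as large as $m^{1/2-o(1)}$ for every large $m$ simultaneously, allowing the random substitution $m=\qzeta_n$. On the upper side the only real care is to convert the almost-sure (hence random) distance bound of \Cref{lem:max_dist} into a uniform tail estimate, which is why I isolate the $\tree^{IGW}$-measurable event $F_n$ before conditioning.
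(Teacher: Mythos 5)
Your route is essentially the paper's: upper bound via an exponential tail estimate for the $\brwD$-walk (your coordinatewise Bernstein bound plays the role of the paper's Petrov inequality \eqref{petrov}) plus a union bound over the $n+1$ vertices and the distance control of \Cref{lem:max_dist}; lower bound by restricting to the spine walk $(\brwNd{\spine_k})_{k\ge0}$ and applying Hirsch's law of the iterated logarithm with the monotone random substitution $m=\qzeta_n$, which is exactly the paper's one-line reduction to the argument of \Cref{lm:population}. The lower-bound half is complete and correct, including the point you rightly emphasize, that the simultaneous-in-$m$ lower bound on the running maximum is what legitimizes plugging in the random index $\qzeta_n$.

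There is, however, one genuine gap in your upper bound: the claim that $F_n=\{\max_{0\le i\le n}\dist(\varnothing,\seq{i})\le n^{1/2}(\log n)^{C}\}$ satisfies $\sum_n\qP{\brwCh}(F_n^c)<\infty$, attributed to ``the proof of \Cref{lem:max_dist}''. That proof decomposes $\dist(\varnothing,\seq{i})=\qzeta_i+\qH_i$ and supplies a summable tail only for the height part, namely $\qP{\brwCh}\big(\max_{1\le k\le n}\qH_k\ge n^{1/2}(\log n)^2\big)\le n\,e^{1-C_2(\log n)^2}$; the spine part $\qzeta_n$ is controlled only almost surely, through the LILs invoked in \Cref{lm:population}, with no quantitative tail whatsoever. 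Nor do the stated hypotheses hand you one: $\mu$ has only finite variance, and since $\qP{\brwCh}(\qzeta_n\ge m)=\qP{\brwCh}(-\min_{0\le k\le n}Y_k\ge m)$, Kolmogorov/Chebyshev at $m=n^{1/2}(\log n)^{C}$ yields only $O((\log n)^{-2C})$, which is not summable for any $C$. (A summable bound is in fact true here, but only by exploiting the special skip-free structure that $Y$ has increments bounded below by $-1$, via a one-sided Bernstein inequality --- an argument you neither state nor use.) The repair is standard and is precisely what the paper does: do not demand summability of $\qP{\brwCh}(F_n^c)$; instead bound the probability of the \emph{intersection} $\{\max_{0\le i\le n}|\brwNd{\seq{i}}|\ge n^{\frac14+\varepsilon}\}\cap F_\varepsilon(n)$, with $F_\varepsilon(n)$ as in \eqref{Fn} (so $b_n=n^{\frac12+\varepsilon}$, giving $a_n^2/b_n=n^{\varepsilon}$ and a bound $n\,e^{-cn^{\varepsilon}}$, summable), apply Borel--Cantelli to that intersected event, and then invoke \Cref{lem:max_dist} separately to conclude that $F_\varepsilon(n)$ holds almost surely for all large $n$. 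With this modification your Bernstein estimate goes through verbatim and the proof is correct.
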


\begin{proof} 

Conditionally on $u\in \tree^{IGW}$ with $\dist(\varnothing, u)=k$, $V_u$ is distributed as $S_k$, where $(S_n)_{n\ge0}$ is a $\brwD$-random walk started at $0$, i.e. a random walk in ${\mathbb Z}^d$   whose law is  $\rwP{\brwD}{0}$.   By assumption \eqref{assumption},   $\rwE{\brwD}{0}(S_1)=0$ and $S_1$ has some finite exponential moments. 

Notice that $(V_{\spine_j}, 0\le j \le \zeta_n)$ is a $\theta$-random walk on $\mathbb Z^d$, and $\zeta_n=n^{\frac{1}{2}+o_{\tt as}(1)}$, we have the lower bound
$
\max_{0\le i\le n}|\brwNd{\seq{i}}|\ge \max_{0\le j\le \zeta_n}|\brwNd{\spine_{j}}|=n^{\frac{1}{4}+o_{\tt as}(1)}
$
by the same argument as in the proof of \Cref{lm:population}.

Below we show the upper bound
$
\max_{0\le i\le n}|\brwNd{\seq{i}}|\le n^{\frac{1}{4}+o_{\tt as}(1)}.
$
Indeed, applying Petrov (\cite{petrov1995limit}, Theorem 2.7 and Lemma 2.2)  gives that for all $n\ge 1$ and $\lambda>0$, \begin{equation}\label{petrov}\rwP{\brwD}{0}\big( |S_n| \ge \lambda \big) \le   \max (e^{-C_3 \frac{\lambda^2}{n}}, e^{-C_3 \lambda}).\end{equation}

\noindent It follows that for any  $\varepsilon >0$,  \begin{eqnarray*}\qP{\brwCh,\brwD}\Big( \max_{0\le i\le n}|\brwNd{\seq{i}}| \ge n^{\frac{1}{4}+ \varepsilon}, \, \max_{0\le i\le n}\dist(\varnothing,\seq{i})\le n^{\frac{1}{2}+ \varepsilon}\Big)
&\le&
n \, \max_{0\le k \le n^{\frac{1}{2}+ \varepsilon}} \rwP{\brwD}{0}\big( |S_k| \ge n^{\frac{1}{4}+ \varepsilon}\big)
\\
&\le&
  n \, e^{- C_3 n^\varepsilon},
\end{eqnarray*}

\noindent whose sum over $n$ converges. By using the Borel-Cantelli lemma and Lemma \ref{lem:max_dist}, we get the Corollary. 
\end{proof}

For  $\varepsilon \in (0, \frac14)$, let \begin{equation}\label{Fn} F_\varepsilon(n):= \Big\{\max_{0\le i\le n}\dist(\varnothing,\seq{i})< n^{\frac{1}{2}+\varepsilon}\Big\}. \end{equation}

\noindent By Lemma \ref{lem:max_dist}, almost surely $F_\varepsilon(n)$ holds for all large $n$. 

\begin{lemma}\label{lm:count_pair}
Let $\brwCh\not\equiv\delta_{1}$ be a probability measure on $\mathbb N$ with mean $1$ and finite second  moment, then for any $k\ge 0$,
\[
\qE{\brwCh}\bracks*{\#\setof{(i,j)}{0\le i\le j\le n,\dist(\seq{i},\seq{j})=k} \, 1_{F_n(\varepsilon)}}\le  (k+1)^2 n^{\frac12+\varepsilon} + C_4 (k+1) n^{1+2 \varepsilon},
\]
\noindent where $C_4:= \sum_{j=0}^\infty j^2 \mu(j)$.
\end{lemma}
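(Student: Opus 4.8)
The plan is to bound the expectation by a deterministic overcount that exploits the depth restriction encoded in $F_\varepsilon(n)$, and then to evaluate this overcount through the common-ancestor structure of the forest. The point to exploit is that on $F_\varepsilon(n)$ every one of the first $n+1$ vertices lies within graph-distance $n^{1/2+\varepsilon}$ of $\varnothing$; in particular $\qzeta_n<n^{1/2+\varepsilon}$, since $\spine_{\qzeta_n}$ is explored and sits at depth $\qzeta_n$. Hence all explored vertices are confined to $\tree_0,\dots,\tree_M$ with $M:=\lfloor n^{1/2+\varepsilon}\rfloor$, each at depth at most $n^{1/2+\varepsilon}$. I would therefore dominate, pointwise, the counted quantity times $1_{F_\varepsilon(n)}$ by $\widehat N_k$, the number of unordered pairs $\{x,y\}$ at distance $k$ with $x,y\in\tree_0\cup\dots\cup\tree_M$ and $\dist(\varnothing,x),\dist(\varnothing,y)\le n^{1/2+\varepsilon}$, and bound $\qE{\brwCh}[\widehat N_k]$. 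This depth truncation is not cosmetic: a critical $\brwCh$-Galton--Watson tree has infinite expected total progeny, so without the cutoff the expected number of pairs at a fixed distance is already infinite, and it is exactly the $n^{1/2+\varepsilon}$ ceiling on generations that renders every sum below finite.

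Next I would split $\widehat N_k$ according to whether the two endpoints lie in the same subtree, using the partition $\tree^{IGW}=\bigsqcup_m\tree_m$ with $\tree_m$ rooted at $\spine_m$. For endpoints in distinct subtrees, say $x\in\tree_a$ and $y\in\tree_b$ with $a<b\le M$, the common ancestor is forced to be $\spine_a$, so $\dist(x,y)=p+(b-a)+q$ where $p=\dist(\spine_a,x)$ and $q=\dist(\spine_b,y)$ are the within-subtree depths. For a fixed decomposition $k=p+(b-a)+q$ with $p,q\ge0$ and $b-a\ge1$, independence of $\tree_a$ and $\tree_b$ together with criticality gives $\qE{\brwCh}\big[\#\{x\in\tree_a:\dist(\spine_a,x)=p\}\cdot\#\{y\in\tree_b:\dist(\spine_b,y)=q\}\big]=1$, since the expected size of generation $p$ of a critical $\brwCh$-Galton--Watson tree equals $1$. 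Summing over the at most $M+1\le n^{1/2+\varepsilon}$ choices of $a$ and over the $\binom{k+1}{2}\le\tfrac12(k+1)^2$ admissible pairs $(p,q)$ produces the first term, $(k+1)^2n^{1/2+\varepsilon}$.

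The same-subtree contribution is where the second moment of $\brwCh$ enters, and I would treat it by decomposing each within-$\tree_m$ pair according to its common ancestor $w$. Pairs in which one endpoint is $w$ (ancestor--descendant pairs) contribute $\#\{x:k\le\dist(\spine_m,x)\le n^{1/2+\varepsilon}\}$, of expectation $\sum_{\ell=k}^{M}1\le n^{1/2+\varepsilon}$ per subtree by criticality. Pairs in which neither endpoint is $w$ pass through two distinct children of $w$; writing $\chi_w$ for the number of children of $w$, conditioning on the tree up to the children of $w$ and using independence and criticality of the child-subtrees bounds their conditional expected contribution by $\tfrac12\chi_w(\chi_w-1)\sum_{p+q=k,\,p,q\ge1}1\le\tfrac{k-1}{2}\chi_w^2$. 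The branching identity $\qE{\brwCh}\big[\sum_{w\in\text{gen }\ell}\chi_w^2\big]=\qE{\brwCh}\big[\#(\text{generation }\ell)\big]\cdot\sum_j j^2\brwCh(j)=C_4$ then gives, after summing over generations $\ell\le n^{1/2+\varepsilon}$ and over the $\le n^{1/2+\varepsilon}$ subtrees, a bound of order $C_4(k+1)n^{1+2\varepsilon}$; the ancestor--descendant part is dominated by this, as $C_4\ge1$. Adding the two cases proves the claim.

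The main obstacle is bookkeeping rather than a hard estimate: one must route the divergence arising from the infinite expected progeny of critical trees entirely through the $F_\varepsilon(n)$ depth cutoff, and must identify that the offspring law enters beyond its mean only when counting pairs of distinct children at a branch point, which is exactly what produces the factor $C_4=\sum_j j^2\brwCh(j)$ and explains the finite-second-moment hypothesis on $\brwCh$. Care is also needed to verify that for endpoints in different subtrees the common ancestor is always a spine vertex, so that no second-moment factor appears there; this is precisely what separates the two terms and their different powers of $n$.
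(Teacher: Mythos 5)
Your proof is correct and essentially reproduces the paper's argument: both decompose pairs according to the position of the common ancestor $\seq{i}\wedge\seq{j}$ (on the spine, i.e.\ endpoints in different subtrees, versus inside a single subtree), use criticality of $\brwCh$ (expected generation size $1$) together with independence of the subtrees for the cross-subtree term, and charge each within-subtree branch point $w$ with $\chi_w^2$, whose expected sum per generation is $C_4=\sum_j j^2\brwCh(j)$, with the event $F_\varepsilon(n)$ supplying exactly the same $n^{\frac12+\varepsilon}$ truncations on the subtree index and on the generation of the common ancestor. The only cosmetic difference is that you isolate the ancestor--descendant pairs (one endpoint equal to $w$) as a separate term dominated by the second, whereas the paper absorbs this case into the bound by $j^2$ at the branch point.
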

\begin{proof} For any $u, v \in \tree^{IGW}$, we $u\preceq v$, if $u$ is an ancestor of $v$ and denote by $u \wedge v$ their most youngest common ancestor.  We consider the two cases: $\seq{i}\wedge\seq{j}\in \setof{\spine_{\ell}}{\ell\ge 0}$, $\seq{i}\wedge\seq{j}\ne\setof{\spine_{\ell}}{\ell\ge 0}$ separately.

{\bf First case:}  $\seq{i}\wedge\seq{j}= \spine_\ell$ for some $\ell\ge 0$.

Note that the subtree rooted at $\spine_{\ell}$, $\mathcal T_\ell=\setof{\gnNd}{\spine_\ell \preceq\gnNd,\spine_{\ell+1}\not\preceq\gnNd}$, is 
%the subtree rooted at $\spine_\ell$. Since $\mathcal T_\ell$ is 
a critical Galton-Watson tree,
\begin{equation}\label{ex1}
\qE{\brwCh}\bracks*{
\#\setof{\gnNd\in\mathcal T_\ell}{\dist(\gnNd,\spine_\ell)=k}}
=1, \qquad \forall k\ge 1.
\end{equation}

\begin{figure}[ht]
\centering
\includegraphics[scale=0.5]{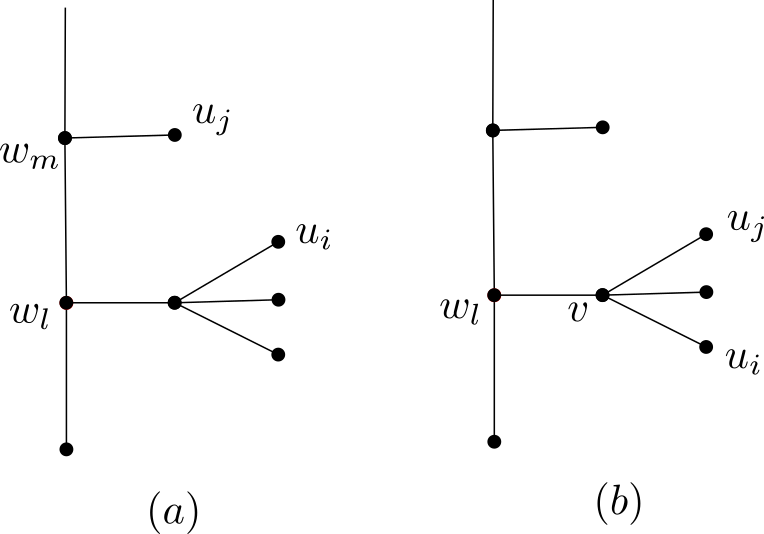}
\caption{\leftskip=1.8truecm \rightskip=1.8truecm The relative position of $\seq{i},\seq{j}$ and $\spine_ell$.}\label{figab}
\end{figure}

As is shown in \Cref{figab}(a), %we have $\spine_\ell\prec\seq{i}$, $\spine_\ell\not\prec\seq{j}$
%{\blue\sout {let $u_i \in \tree_\ell$ and $u_j \in \tree_m$ with $\ell < m$, then}}  % by using the notation $\zeta_n$ defined in \eqref{zeta}, 
\begin{eqnarray}
&&\qE{\brwCh}\bracks*{\#\setof{(i,j)}{0\le i\le j\le n,\dist(\seq{i},\seq{j})=k,\seq{i}\wedge\seq{j}\in \setof{\spine_{\ell}}{\ell\ge 0}}\, 1_{F_n(\varepsilon)} }
\nonumber\\
&\le & 
\sum_{r=0}^k\sum_{0\le \ell<m\le \ell+k-r} 1_{\{m\le n^{\frac12+\varepsilon}\}}\,  \qE{\brwCh}\Big[  \sum_{u\in \tree_\ell, u'\in \tree_m} 1_{\{\dist(u, \spine_\ell)=r, \, \dist(u', \spine_m)=k-r- (m-\ell)\}} \Big]
\nonumber\\
&=&
\sum_{r=0}^k\sum_{0\le \ell<m\le \ell+k-r} 1_{\{m< n^{\frac12+\varepsilon}\}}
\nonumber\\
&\le&
(k+1)^2 \, n^{\frac12+\varepsilon}, \label{1ercas}
\end{eqnarray}

\noindent where the above equality follows from  \eqref{ex1} and  the independence of $\tree_\ell$ and $\tree_m$.

%\noindent Note that $\{m \le \zeta_n\} \subset \{\#\tree_0 +...+\#\tree_{m-1}\le n\} \subset \{\sum_{j=0, j\neq \ell}^{m-1} \#\tree_j \le n\} $ which is independent of the trees $\tree_\ell$ and $\tree_m$.  Then by \eqref{eq:upper_zeta}, we see that the above expectation term $\qE{\brwCh}[1_{\{m\le \zeta_n\}}  \sum_{u\in \tree_\ell, u'\in \tree_m} \cdots ] $ is less than $\qE{\brwCh}[1_{\{\sum_{j=0, j\neq \ell}^{m-1} \#\tree_j \le n\}}]= \qP{\brwCh}(\sum_{j=0}^{m-2} \#\tree_j \le n) $.  Since $\{\sum_{j=0}^{m-2} \#\tree_j \le n\} \subset \{\zeta_n \ge m-2\}$, we get that \begin{eqnarray*}
%&&\qE{\brwCh}\bracks*{\#\setof{(i,j)}{0\le i\le j\le n,\dist(\seq{i},\seq{j})=k,\seq{i}\wedge\seq{j}\in \setof{\spine_{\ell}}{\ell\ge 0}}}
%\\&\le & 
%\sum_{r=0}^k\sum_{0\le \ell<m \le \ell+k}  \qP{\brwCh}\big(\zeta_n \ge m-2 \big)
%\\&\le& 
%(k+1)^2 \,  \qE{\brwCh}(\zeta_n+2)
%\\&\le& 
%(k+1)^2 \, (c_1 n^{1/2} +2),
%\end{eqnarray*}

%\noindent where the last inequality follows from \eqref{zeta2}.

{\bf Second (and last) case:}   $\seq{i}\wedge\seq{j} \not\in\setof{\spine_{\ell}}{\ell\ge 0}$.

  For this case, similarly as shown in \Cref{figab}(b), let $v= u_i \wedge u_j$. On $F_n(\varepsilon)$,  $\dist(\varnothing, v) \le n^{\frac12+\varepsilon}$.  Then 
  \begin{eqnarray*}
  && \qE{\brwCh}\bracks*{\#\setof{(i,j)}{0\le i\le j\le n,\dist(\seq{i},\seq{j})=k,\seq{i}\wedge\seq{j}\not\in \setof{\spine_{\ell}}{\ell\ge 0}} \, 1_{F_n(\varepsilon)} }
  \\
  &\le& \sum_{r=0}^k\sum_{0\le \ell,t< n^{\frac{1}{2}+\varepsilon}} \qE{\brwCh} \Big[ \sum_{v\in \tree_\ell, \dist(v, \spine_\ell)=t} \, \sum_{u\wedge u'=v} 1_{\{\dist(u, v)=r, \, \dist(u', v)= k-r\}}\Big] .
  %\\&\le& 
 % \sum_{r=0}^k\sum_{0\le \ell,t< n^{\frac{1}{2}+\varepsilon}}
  \end{eqnarray*}
  
  \noindent Conditionally on the number of children of $v$, say $j$,   by using \eqref{ex1}, the expectation of $\sum_{u\wedge u'=v} 1_{\{\dist(u, v)=r, \, \dist(u', v)= k-r\}}$ is dominated by $j^2$. Again using \eqref{ex1}, we deduce from the branching property  that $ \qE{\brwCh} \big[ \sum_{v\in \tree_\ell, \dist(v, \spine_\ell)=t} \, \sum_{u\wedge u'=v} 1_{\{\dist(u, v)=r, \, \dist(u', v)= k-r\}}\big] \le \sum_{j=0}^\infty j^2 \mu(j)$, which together with \eqref{1ercas} yield the Lemma.  
\end{proof}

\section{Proof of the upper bound in Proposition \ref{thm:mainlow}}
Before studying the capacity, we need the basic notation of Green's function:
\[
\green{\rwD}(x,y)=\green{\rwD}(y-x)
:=\rwE{\rwD}{0}\bracks*{\sum_{i=0}^\infty\mathbf 1_{{\{}\rwS{i}=y-x{\}}}}
=\sum_{i=0}^\infty\rwP{\rwD}{0}(\rwS{i}=y-x),\qquad x,y\in\mathbb Z^d.
\]
The Green function $\green{\rwD}(x)$ has the following asymptotic estimate:
\begin{lemma}[Lawler and Limic {\cite[Theorem 4.3.5]{Lawler-book-RW}}]
\label{Green_asymptotic}
Given an aperiodic and irreducible distribution $\rwD$ on $\mathbb Z^d (d\ge 3)$ with mean $0$ and covariance matrix $\rwVar{\rwD}$,
if it has finite $(d+1)$-th moment $\rwE{\rwD}{0}[|\rwS{1}|^{d+1}]<\infty$, then
\[
\green{\rwD}(x)=\frac{C_{d,\rwD}}{\rwJ{\rwD}(x)^{d-2}}+O(|x|^{1-d}),
\]
where 
$C_{d,\rwD}=\frac{\mathbf \Gamma(\frac{d}{2})}{(d-2)\pi^{d/2}\sqrt{\det\rwVar{\rwD}}},$
$\mathbf \Gamma(\cdot)$ refers to the Gamma function and $\rwJ{\rwD}(x)=\sqrt{x\cdot\rwVar{\rwD}^{-1}x}.$
\end{lemma}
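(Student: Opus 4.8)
This estimate is classical (it is \cite[Theorem 4.3.5]{Lawler-book-RW}); I only sketch the strategy. Throughout, $c,C$ denote positive constants that may change from line to line. Write $p_n(x):=\rwP{\rwD}{0}(\rwS{n}=x)$, so that $\green{\rwD}(x)=\sum_{n\ge0}p_n(x)$, and introduce the centered Gaussian density with covariance $\rwVar{\rwD}$,
\[
\bar p_t(x):=\frac{1}{(2\pi t)^{d/2}\sqrt{\det\rwVar{\rwD}}}\,\exp\!\Big(-\frac{\rwJ{\rwD}(x)^2}{2t}\Big),\qquad t>0.
\]
The plan is to replace each $p_n(x)$ by $\bar p_n(x)$ through the local central limit theorem, to recognize $\sum_{n}\bar p_n(x)$ as a Riemann sum of $\int_0^\infty\bar p_t(x)\,dt$, and to show that the two resulting errors are each $O(|x|^{1-d})$ as $|x|\to\infty$.

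For the leading term, the substitution $s=\rwJ{\rwD}(x)^2/(2t)$ turns the integral into a Gamma integral, yielding
\[
\int_0^\infty\bar p_t(x)\,dt
=\frac{\mathbf\Gamma(\tfrac d2-1)}{2\pi^{d/2}\sqrt{\det\rwVar{\rwD}}}\,\frac{1}{\rwJ{\rwD}(x)^{d-2}}
=\frac{C_{d,\rwD}}{\rwJ{\rwD}(x)^{d-2}},
\]
where the last identity uses $\mathbf\Gamma(\tfrac d2-1)=\tfrac{2}{d-2}\mathbf\Gamma(\tfrac d2)$. Thus the announced main term appears with the correct constant, and it remains to estimate $\green{\rwD}(x)-\int_0^\infty\bar p_t(x)\,dt=A+B$, where $A:=\sum_{n\ge0}\bigl(p_n(x)-\bar p_n(x)\bigr)$ and $B:=\sum_{n\ge0}\bar p_n(x)-\int_0^\infty\bar p_t(x)\,dt$.

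I would dispose of the discretization error $B$ by an Euler--Maclaurin comparison. Since $\partial_t\bar p_t(x)=\bar p_t(x)\bigl(\tfrac{\rwJ{\rwD}(x)^2}{2t^2}-\tfrac{d}{2t}\bigr)$, at the dominant scale $t\asymp|x|^2$ one has $|\partial_t\bar p_t(x)|\asymp|x|^{-d-2}$; summing $\sup_{[n,n+1]}|\partial_t\bar p_t|$ over the $O(|x|^2)$ values of $n$ near this scale, and adding the super-polynomially small $t\ll|x|^2$ part together with the $t^{-d/2-1}$ tail for $t\gg|x|^2$, gives $B=O(|x|^{-d})$, negligible at the required order.

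The heart of the matter, and the step I expect to be the main obstacle, is the term $A$, which I would split at $n_0:=|x|^2/(C\log|x|)$ for a large constant $C$. For $n\ge n_0$ I would invoke a sharp local central limit theorem from \cite{Lawler-book-RW}, valid under the finite $(d+1)$-th moment assumption, of the form $|p_n(x)-\bar p_n(x)|\le C\,n^{-(d+1)/2}e^{-c|x|^2/n}$ for $\sqrt n\ll|x|$ and $|p_n(x)-\bar p_n(x)|\le C\,n^{-(d+1)/2}$ otherwise; summing over $n\ge n_0$ gives $O(|x|^{1-d})$, the decisive contribution coming from $n\asymp|x|^2$, while the Gaussian weight in the error removes the spurious logarithmic factor on the range $n_0\le n\le|x|^2$. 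For $n<n_0$ both terms are small: one has $\rwJ{\rwD}(x)^2/(2n)\ge cC\log|x|$, whence $\sum_{n<n_0}\bar p_n(x)\le C|x|^{-cC}=O(|x|^{1-d})$ once $C$ is large, while for the walk itself the strong Markov property at the first hitting time of $x$ gives
\[
\sum_{n<n_0}p_n(x)\le \green{\rwD}(0)\,\rwP{\rwD}{0}\Bigl(\max_{0\le n<n_0}|\rwS{n}|\ge|x|\Bigr)\le C\,\green{\rwD}(0)\Bigl(n_0\,|x|^{-(d+1)}+e^{-c|x|^2/n_0}\Bigr),
\]
the last inequality being a maximal/Fuk--Nagaev one-big-jump estimate. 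With the above $n_0$ the first term equals $|x|^{1-d}/(C\log|x|)$ and the second equals $|x|^{-cC}$: here the finite $(d+1)$-th moment of $\rwD$ enters in a sharp way, being exactly what makes the one-big-jump contribution of order $|x|^{1-d}$ rather than the larger $|x|^{2-d}$. Collecting $A=O(|x|^{1-d})$ and $B=O(|x|^{-d})$ completes the proof.
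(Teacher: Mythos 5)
The paper offers no proof of this lemma: it is imported verbatim from Lawler and Limic \cite[Theorem 4.3.5]{Lawler-book-RW}, so the only meaningful comparison is with the textbook argument, and your outline follows that same standard route: sum the local CLT, recognize $\sum_{n}\bar p_n(x)$ as a Riemann sum of $\int_0^\infty \bar p_t(x)\,dt$, and kill the short-time part by a hitting-time argument. Your constant computation (via $\mathbf\Gamma(\tfrac d2-1)=\tfrac{2}{d-2}\mathbf\Gamma(\tfrac d2)$) is correct, the Euler--Maclaurin estimate $B=O(|x|^{-d})$ is fine, and your short-time treatment is sound: $\sum_{n<n_0}p_n(x)\le \green{\rwD}(0)\,\rwP{\rwD}{0}(\tau_x<n_0)$ by the strong Markov property at the first hitting time of $x$, combined with a Fuk--Nagaev bound with exponent $d+1$; you correctly identify that the finite $(d+1)$-th moment is exactly what makes the one-big-jump term $n_0|x|^{-(d+1)}$ of order $o(|x|^{1-d})$ with your choice $n_0=|x|^2/(C\log|x|)$.

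The one genuine gap is the local CLT input you invoke on the window $n_0\le n\lesssim |x|^2$: an error bound of the form $|p_n(x)-\bar p_n(x)|\le Cn^{-(d+1)/2}e^{-c|x|^2/n}$ for $\sqrt n\ll|x|$ is \emph{not} available under a polynomial moment assumption. Gaussian decay of the LCLT error in $|x|^2/n$ requires exponential-type moments: with only $\rwE{\rwD}{0}[|\rwS1|^{d+1}]<\infty$ one can place jump mass of size roughly $R^{-(d+1)}$ at isolated sites at distance $R$ along a sparse sequence, and the resulting one-big-jump contribution to $p_n(x)$, of order $n^{1-d/2}|x|^{-(d+1)}$, exceeds your claimed bound in precisely the moderate-deviation window where you use it (once $C$ is large). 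With only the unweighted bound $Cn^{-(d+1)/2}$ (valid under a third-moment assumption), the sum over $n\in[n_0,|x|^2]$ picks up an extra factor $(\log|x|)^{(d-1)/2}$, so your argument as written yields $\green{\rwD}(x)=C_{d,\rwD}\rwJ{\rwD}(x)^{2-d}+O\bigl(|x|^{1-d}(\log|x|)^{(d-1)/2}\bigr)$ rather than the stated remainder. The repair, as in \cite{Lawler-book-RW}, is to spend the $(d+1)$-th moment a second time on the LCLT error itself: either through weighted error estimates obtained by differencing the characteristic function $d+1$ times, which give decay $|x|^{-(d+1)}$ (summing over the $O(|x|^2)$ values of $n$ in the window to exactly $O(|x|^{1-d})$), or equivalently by truncating the jumps at scale $\epsilon|x|$, applying the Gaussian-factor LCLT to the truncated walk, and bounding the big-jump correction, again of order $|x|^{1-d}$. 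It is worth noting that the lossy, log-corrected version your argument does establish would already suffice for every use of this lemma in the present paper, since Lemmas \ref{lm:realG} and \ref{lm:green_upper} only require $\green{\rwD}(x)\asymp(1+|x|)^{2-d}$ up to $n^{o(1)}$ factors.
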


Below is a lemma that connects the capacity with Green's function, which is inspired from \cite[Lemma 2.12]{bai2020capacity}.

\begin{lemma}\label{lm:GreenToCap}
Let $\rwD$ be a probability distribution in $\mathbb Z^d,\,d\ge 3$.
For any sequence $(x_n)_{n\ge 0}\in\mathbb Z^d$,  
\[
\frac{1}{n+1}\sum_{i=0}^n  1_{x_i\not\in\braces{x_{i+1},\cdots,x_n}}\rwP{\rwD}{x_i}(\tau_{\braces{x_0,\cdots,x_n}}^+=\infty)\sum_{j=0}^n \green{\rwD}(x_j,x_i)=1,
\]
where under $\rwP{\rwD}{x}$, $(S_n)$ is a random walk on $\mathbb Z^d$ started at $x$ and with jump distribution $\eta$,   and  $\tau^+_A:=\inf\setof*{i\ge 1}{S_i\in A}$ denotes as before the first returning time of $A$, for any finite $A \subset \mathbb Z^d$.
\end{lemma}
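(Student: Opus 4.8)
The plan is to reduce the displayed double sum to a single pointwise identity relating the Green function to the escape (equilibrium) probability, and then prove that identity by a last-exit decomposition. Write $A:=\{x_0,\dots,x_n\}$ for the underlying set and put $e_A(x):=\rwP{\rwD}{x}(\tau^+_A=\infty)$ for $x\in A$. The indicator $1_{x_i\notin\{x_{i+1},\dots,x_n\}}$ retains, for each distinct value taken by the sequence, exactly the index of its last occurrence; hence as $i$ ranges over the retained indices, $x_i$ ranges bijectively over $A$, and the left-hand side (before dividing by $n+1$) equals
\[
\sum_{x\in A} e_A(x)\sum_{j=0}^n \green{\rwD}(x_j,x)=\sum_{j=0}^n\Big(\sum_{x\in A}\green{\rwD}(x_j,x)\,e_A(x)\Big).
\]
Since each $x_j\in A$, the whole statement follows once I establish the pointwise identity
\[
\sum_{x\in A}\green{\rwD}(y,x)\,e_A(x)=1\qquad\text{for every }y\in A,
\]
because summing it over $j=0,\dots,n$ with $y=x_j$ then produces $n+1$.

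To prove the pointwise identity I would argue as follows. The assumptions on $\rwD$ (aperiodic, irreducible, mean zero on $\mathbb Z^d$ with $d\ge 3$) guarantee that the $\rwD$-walk is transient, so $\green{\rwD}$ is finite and, starting from $y\in A$, the last visit time to the finite set $A$, namely $L:=\sup\{i\ge 0:\rwS{i}\in A\}$, is almost surely finite with $\rwS{L}\in A$. Decomposing according to the pair $(L,\rwS{L})$ and applying the Markov property at time $k$,
\[
\rwP{\rwD}{y}(L=k,\rwS{k}=x)=\rwP{\rwD}{y}(\rwS{k}=x)\,\rwP{\rwD}{x}(\rwS{1}\notin A,\rwS{2}\notin A,\dots)=\rwP{\rwD}{y}(\rwS{k}=x)\,e_A(x),
\]
since the shifted event $\{\rwS{k+1}\notin A,\rwS{k+2}\notin A,\dots\}$ is exactly $\{\tau^+_A=\infty\}$ for the walk restarted at $x$. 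Summing over $k\ge 0$ and using $\sum_{k\ge 0}\rwP{\rwD}{y}(\rwS{k}=x)=\green{\rwD}(y,x)$ gives $\rwP{\rwD}{y}(\rwS{L}=x)=\green{\rwD}(y,x)\,e_A(x)$; summing this over $x\in A$ together with $\rwP{\rwD}{y}(L<\infty,\rwS{L}\in A)=1$ yields the claimed identity.

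There is no serious analytic difficulty here; the points requiring care are bookkeeping and the probabilistic setup. I would check that the last-occurrence indicator indeed produces a bijection onto $A$, so that the multiplicities appearing in $\sum_{j}\green{\rwD}(x_j,\cdot)$ are correctly absorbed by summing the pointwise identity over \emph{all} $j$ (including repeated values), and that the last-exit decomposition is legitimately applied with the starting point $y$ lying in $A$, so that $L\ge 0$ and the time-$0$ contribution is included. The transience of the walk, which makes $L$ finite almost surely and underlies the whole decomposition, is the one structural hypothesis to flag, and it is exactly what the dimension and moment conditions in \eqref{assumption} provide. Dividing the resulting $n+1$ by $n+1$ completes the proof.
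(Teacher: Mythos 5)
Your proof is correct and follows essentially the same route as the paper: both reduce the statement to the pointwise identity $\sum_{x\in A}G_{\eta}(y,x)\,\mathsf P^{\eta}_x(\tau^+_A=\infty)=1$ via the last-exit decomposition $1=\mathsf P^{\eta}_y(\sigma_A<\infty)$ (using transience in $d\ge 3$), absorb the last-occurrence indicator as a bijection onto $A$, and then sum over the $n+1$ starting points. Your write-up merely makes explicit the Markov-property step at the last visit time, which the paper leaves implicit in its chain of equalities.
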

\begin{proof}
Since the random walk $(S_n)$ in dimension $d\ge 3$ is transient, for any finite set $A\subset\mathbb Z^d$ and $z\in A$, let $\sigma_A:=\sup\setof{i\ge 0}{S_i\in A}$ be the last-passage time, then
\begin{align*}
1&=\rwP{\rwD}{z}(\sigma_A<\infty)\\
&=\sum_{x\in A}\sum_{i=0}^\infty \rwP{\rwD}{z}(S_i=x)\rwP{\rwD}{x}(\tau_A^+=\infty)\\
&=\sum_{x\in A}\green{\rwD}(z,x)\rwP{\rwD}{x}(\tau_A^+=\infty).
\end{align*}
Take $A=\braces{x_0,\cdots,x_n}$ in this equation, then
\[
\sum_{i=0}^n \mathbf 1_{x_i\not\in\braces{x_{i+1},\cdots,x_n}}\rwP{\rwD}{x_i}(\tau_{\braces{x_0,\cdots,x_n}}^+=\infty) \green{\rwD}(z,x_i)=1,
\]
and the conclusion follows by summing over $z=x_0,\cdots,x_n$.
\end{proof}
Then we estimate the sum of Green's functions.
\begin{lemma}\label{lm:realG}
In dimensions $d=3,4,5$, let $\brwCh,\brwD,\rwD$ be probability distributions with the conditions in \eqref{assumption}. Then $\qP{\brwCh,\brwD}$-almost surely, \[
\min_{0\le i\le n}\sum_{j=0}^{n}
\green{\rwD}(\hdX{i},\hdX{j})\ge n^{\frac{6-d}{4}+o_{\tt as}(1)}.
\]
\end{lemma}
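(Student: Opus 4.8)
The plan is to avoid any fine analysis of the genealogy and to exploit only the fact that the whole range $\gnR[0,n]$ is confined to a ball of radius $n^{1/4+o_{\tt as}(1)}$. Since we are after a \emph{lower} bound on a sum of $n+1$ Green terms, and each such term is at least the inverse of the range's diameter raised to the power $d-2$, a crude diameter estimate already produces the exponent $\frac{6-d}{4}$. In particular I expect that no concentration or second-moment argument will be needed.

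First I would record a pointwise lower bound on Green's function: there is a constant $c>0$ such that
\[
\green{\rwD}(x)\ \ge\ c\,(1\vee|x|)^{2-d},\qquad x\in\mathbb Z^d.
\]
For large $|x|$ this follows from \Cref{Green_asymptotic}: since $\rwVar{\rwD}$ is positive definite we have $\rwJ{\rwD}(x)\asymp|x|$, so $C_{d,\rwD}\,\rwJ{\rwD}(x)^{2-d}\asymp|x|^{2-d}$, while the error $O(|x|^{1-d})$ is of smaller order and can be absorbed. For bounded $|x|$ the irreducibility of $\rwD$ gives $\green{\rwD}(x)>0$, hence a uniform positive lower bound over any finite set, which dominates $(1\vee|x|)^{2-d}$.

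Next, by \Cref{cor:max_v} we have almost surely $\max_{0\le i\le n}|\hdX{i}|=n^{1/4+o_{\tt as}(1)}$, so the diameter of the range satisfies
\[
A_n:=\max_{0\le i,j\le n}|\hdX{i}-\hdX{j}|\ \le\ 2\max_{0\le i\le n}|\hdX{i}|\ =\ n^{1/4+o_{\tt as}(1)}.
\]
Combining the two displays, for \emph{every} pair $0\le i,j\le n$ one gets $\green{\rwD}(\hdX{i},\hdX{j})\ge c\,A_n^{2-d}$, and summing over $j$ (keeping all $n+1$ terms, repetitions among the $\hdX{j}$ only helping),
\[
\sum_{j=0}^n\green{\rwD}(\hdX{i},\hdX{j})\ \ge\ (n+1)\,c\,A_n^{2-d}\ =\ n^{\,1-\frac{d-2}{4}+o_{\tt as}(1)}\ =\ n^{\frac{6-d}{4}+o_{\tt as}(1)}.
\]
The decisive point is that this bound is simultaneous in $i$: the event controlling $A_n$ does not depend on $i$, so passing to $\min_{0\le i\le n}$ costs nothing and yields the claim.

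I expect no genuine obstacle here. The only steps requiring care are the uniform lower bound on $\green{\rwD}$ near the origin (handled by irreducibility) and the bookkeeping $1-\frac{d-2}{4}=\frac{6-d}{4}$. It is worth emphasizing that the crude diameter bound is in fact lossless at the level of exponents: the extremal configuration in which all $n$ displacements sit at the maximal scale $n^{1/4}$ is precisely the one that minimizes the Green sum, which is why confining the range to a ball of radius $n^{1/4+o_{\tt as}(1)}$ already gives the sharp exponent, and why no uniformity-over-$i$ union bound nor any variance control is required.
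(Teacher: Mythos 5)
Your proof is correct, and it reaches the stated bound by a legitimate shortcut relative to the paper's own argument. Both proofs ultimately rest on the identical crude step: replace every one of the $n+1$ Green terms by the worst-case term at the diameter scale $n^{1/4+o_{\tt as}(1)}$, a bound that is simultaneous in $i$ and costs nothing at the level of exponents. The difference lies in how the pairwise distances are controlled. The paper first reduces $\green{\rwD}$ to $(1+|\hdX{i}-\hdX{j}|)^{2-d}$ via \Cref{Green_asymptotic}, then runs a union bound over all $(n+1)^2$ pairs using the tail estimate \eqref{petrov} and Borel--Cantelli to replace $|\hdX{i}-\hdX{j}|$ by $n^{\varepsilon}\sqrt{1+\dist(\seq{i},\seq{j})}$, and finally bounds all tree distances by $2\max_{0\le i\le n}\dist(\varnothing,\seq{i})=n^{\frac12+o_{\tt as}(1)}$ from \Cref{lem:max_dist}, proving along the way the more general tree-distance estimate \eqref{eq:min_sum_dist} for arbitrary $\alpha>0$. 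You instead invoke \Cref{cor:max_v} directly, which already packages the depth bound and the Gaussian-type tails into $\max_{0\le i\le n}|\hdX{i}|=n^{\frac14+o_{\tt as}(1)}$, so no fresh union bound over pairs is needed; the one extra ingredient you supply is the pointwise lower bound $\green{\rwD}(x)\ge c\,(1\vee|x|)^{2-d}$, which you assemble correctly from \Cref{Green_asymptotic} for large $|x|$ (your appeal to positive definiteness of $\rwVar{\rwD}$ is valid: a degenerate covariance would confine the mean-zero walk to a hyperplane, contradicting irreducibility) and from irreducibility and transience for the finitely many small $|x|$. Your route is shorter and equally rigorous; what the paper's detour buys is the reusable tree-distance formulation \eqref{eq:min_sum_dist}, parallel in style to the machinery needed for the upper bound on the Green sum in \Cref{lm:green_upper}, though that extra generality is not in fact used elsewhere. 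One cosmetic caveat: your closing claim that the diameter bound is exactly lossless (that the configuration with all displacements at scale $n^{1/4}$ minimizes the Green sum) is heuristic commentary rather than something you prove, and it is not needed, since the lemma only asserts a lower bound.
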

\begin{proof}
By \Cref{Green_asymptotic}, it suffices to show that $\qP{\brwCh,\brwD}$-almost surely,
\begin{equation}\label{eq:lowerG}
\min_{0\le i\le n}\sum_{j=0}^{n}
\frac{1}{(1+|\hdX{i}-\hdX{j}|)^{d-2}}
\ge n^{\frac{6-d}{4}+o_{\tt as}(1)}.
\end{equation}

Denote as before by $\dist(\seq{i},\seq{j})$ the graph distance between the two vertices $\seq{i},\seq{j}$ on the tree, then
\[
\hdX{i}-\hdX{j}\overset{d}{=}
S_{\dist(\seq{i},\seq{j})},
\]

\noindent where $(S_n)_{n\ge0}$ is the $\brwD$-random walk started at $0$,   independent of $\dist(\seq{i},\seq{j})$.

%where $(S_n)$ stands for a $\brwD$-random walk, i.e. a random walk under $\rwP{\brwD}{0}$.

For any  $\varepsilon\in (0, \frac14)$, using the union bound and \eqref{petrov} we get that
\[
\qP{\brwCh,\brwD}\Big(
\cup_{0\le i, j\le n}\{|\hdX{i}-\hdX{j}|\ge n^{\varepsilon}\sqrt{1+\dist(\seq{i},\seq{j})}\}\Big)
\le (n+1)^2 \, e^{- C_3 \, n^{\varepsilon}}.
\]
By the Borel-Cantelli lemma, almost surely  the above  event cannot happen infinitely often. Thus to prove \eqref{eq:lowerG}, it suffices to show that $\qP{\brwCh,\brwD}$-almost surely,
\begin{equation*}
\min_{0\le i\le n}\sum_{j=0}^{n}
\frac{1}{(1+\dist(\seq{i},\seq{j}))^{\frac{d-2}{2}}}
\ge n^{\frac{6-d}{4}+o_{\tt as}(1)},
\end{equation*}
or more generally,   for any $\alpha>0$, $\qP{\brwCh,\brwD}$-almost surely 
\begin{equation}\label{eq:min_sum_dist}
\min_{0\le i\le n}\sum_{j=0}^{n}
\frac{1}{(1+\dist(\seq{i},\seq{j}))^{\alpha}}
\ge n^{1-\frac{\alpha}{2}+o_{\tt as}(1)}.
\end{equation}

Observe that $\min_{0\le i\le n}\sum_{j=0}^{n}
\frac{1}{(1+\dist(\seq{i},\seq{j}))^{\alpha}}
 \ge (n+1)\pars*{1+ \max_{0\le i,j\le n}\dist(\seq{i},\seq{j})}^{-\alpha} \ge  (n+1)\pars*{1+2\max_{0\le i\le n}\dist(\varnothing,\seq{i})}^{-\alpha}$,  then  \eqref{eq:min_sum_dist} follows from \Cref{lem:max_dist}. 
\end{proof}

\medskip
{\noindent\it Proof of the upper bound in Proposition \ref{thm:mainlow}:}  Applying \Cref{lm:GreenToCap} to $\braces{\brwNd{\seq{0}},\cdots,\brwNd{\seq{n}}}$, we deduce from the definition of the $\eta$-capacity that
\begin{align*}
\capa \gnR[0,n]&=\sum_{i=0}^n   1_{\brwNd{\seq{i}}\not\in\braces{\brwNd{\seq{i+1}},\cdots,\brwNd{\seq{n}}}}  \, \rwP{\rwD}{\brwNd{\seq{i}}}\left(\tau_{\braces{\brwNd{\seq{0}},\cdots,\brwNd{\seq{n}}}}^+=\infty \,|\, \braces{\brwNd{\seq{0}},\cdots,\brwNd{\seq{n}}}\right)\\
&\le\frac{n+1}{\min_{0\le i\le n}\sum_{j=0}^n \green{\rwD}(\brwNd{\seq{i}},\brwNd{\seq{j}})},
\end{align*}
and the conclusion follows from \Cref{lm:realG}.  $\hfill\Box$

\section{Proof of the lower bound in Proposition \ref{thm:mainlow}}

For the lower bound, our main tool is the following lemma.
\begin{lemma}[{\cite[Lemma 2.11]{bai2020capacity}}]
\label{capA}
Let $d\ge 3$ {and} $\rwD$ be any probability distribution on $\mathbb Z^d$. For any finite set $A\subset \mathbb Z^d$ and $k\in\mathbb N_+$, 
\[\capa A\ge\frac{\#A}{k+1}-\frac{\sum_{x,y\in A}\green{\rwD}(x,y)}{k(k+1)}.\]
\end{lemma}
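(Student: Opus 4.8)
The plan is to express $\capa A$ through the equilibrium measure of $A$, apply a single Cauchy--Schwarz (second moment) inequality against the constant function, and let the parameter $k$ enter only at the very end through an elementary algebraic step. First I would record the equilibrium identity. Write $e_A(y):=\rwP{\rwD}{y}(\tau^+_A=\infty)$ for $y\in A$, so that $\capa A=\sum_{y\in A}e_A(y)$. Exactly as in the proof of \Cref{lm:GreenToCap}, transience ($d\ge 3$) makes the last-exit time finite a.s.\ and the last-exit decomposition gives, for every $x\in A$,
\[
1=\sum_{y\in A}\green{\rwD}(x,y)\,e_A(y).
\]
In matrix form, with $G:=(\green{\rwD}(x,y))_{x,y\in A}$ and $e:=(e_A(y))_{y\in A}\ge 0$, this reads $Ge=\mathbf 1$, whence $\capa A=\mathbf 1^\top e=e^\top Ge$ and $\mathbf 1^\top Ge=\#A$.

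The core step is Cauchy--Schwarz in the bilinear form attached to $G$. When $\rwD$ is symmetric, $G$ is symmetric positive semidefinite and $\langle u,v\rangle:=u^\top Gv$ is an inner product; then $\langle\mathbf 1,e\rangle=\mathbf 1^\top Ge=\#A$, and Cauchy--Schwarz gives
\[
(\#A)^2=\langle\mathbf 1,e\rangle^2\le\langle\mathbf 1,\mathbf 1\rangle\,\langle e,e\rangle=\Big(\sum_{x,y\in A}\green{\rwD}(x,y)\Big)\,\capa A,
\]
that is, the scale-invariant bound $\capa A\ge(\#A)^2/\sum_{x,y\in A}\green{\rwD}(x,y)$.

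It then remains to deduce the stated $k$-dependent form from this one inequality. Writing $N=\#A$ and $S=\sum_{x,y\in A}\green{\rwD}(x,y)$, it suffices to verify the purely algebraic fact that $\frac{N^2}{S}\ge\frac{N}{k+1}-\frac{S}{k(k+1)}$ for every $k\ge 1$; clearing denominators this is $k(k+1)N^2-SkN+S^2\ge 0$, a quadratic in $N$ with negative discriminant $-(3k+4)kS^2$, hence valid for all $N$. This yields $\capa A\ge\frac{\#A}{k+1}-\frac{\sum_{x,y\in A}\green{\rwD}(x,y)}{k(k+1)}$.

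The main obstacle is that the lemma is stated for an arbitrary $\rwD$, for which $G$ need not be symmetric, so the clean identity $\langle\mathbf 1,e\rangle=\#A$ is unavailable. I would resolve this by passing to the symmetrised form $B(u,v):=\tfrac12(u^\top Gv+v^\top Gu)$ and checking that it is positive semidefinite directly: by Fourier inversion, for any real $v$ supported on $A$,
\[
v^\top Gv=\int_{[-\pi,\pi]^d}|\widehat v(\xi)|^{2}\,\operatorname{Re}\frac{1}{1-\widehat{\rwD}(\xi)}\,d\xi\ge 0,
\]
since $\operatorname{Re}\widehat{\rwD}(\xi)\le 1$ forces $\operatorname{Re}(1-\widehat{\rwD}(\xi))^{-1}\ge 0$. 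Cauchy--Schwarz for $B$ then gives $B(\mathbf 1,e)^2\le (\mathbf 1^\top G\mathbf 1)(e^\top Ge)=S\,\capa A$, and because $B(\mathbf 1,e)=\tfrac12(\#A+e^\top G\mathbf 1)\ge\tfrac12\#A$ (the entries of $G$ and of $e$ being nonnegative), one obtains $\capa A\ge(\#A)^2/(4S)$; the extra constant is harmless, as the same discriminant computation (now with $-16kS^2<0$) again delivers the stated inequality for every $k$, and it is in any case washed out once the bound is optimised over $k$.
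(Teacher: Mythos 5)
Your proposal is correct, but it runs along a genuinely different track from the paper's: the paper gives no proof at all, quoting the lemma from \cite[Lemma 2.11]{bai2020capacity}, where the inequality is obtained by an occupation-time argument --- $\#A$ is split according to whether the number of visits to $A$ exceeds $k$, the heavy part controlled via Markov's inequality by $\sum_{x,y\in A}\green{\rwD}(x,y)$ and the light part by the escape probabilities, so that $k$ enters as a genuine truncation level. In your version $k$ is decorative: you first establish the stronger, $k$-free estimate $\capa A\ge(\#A)^2/\bigl(4\sum_{x,y\in A}\green{\rwD}(x,y)\bigr)$ (with constant $1$ in place of $\tfrac14$ when $\rwD$ is symmetric) by Cauchy--Schwarz for the symmetrised Green bilinear form, and your two discriminant computations ($-(3k+4)kS^2$, resp.\ $-16kS^2$, both of which check out) correctly show that the stated inequality for every $k\ge 1$ is a purely algebraic consequence; your treatment of non-symmetric $\rwD$ --- positivity of $v^\top Gv$ from $\operatorname{Re}\,(1-\widehat{\rwD})^{-1}\ge 0$, together with $B(\mathbf 1,e)\ge\tfrac12\#A$ from nonnegativity of the entries of $G$ and $e$ --- is exactly the extra idea needed. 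What each approach buys: yours yields the sharper scale-invariant bound (and in fact shows the $k$-form, optimised over $k$, can never beat $(\#A)^2/(4S)$), while the cited route is purely probabilistic and avoids Fourier analysis entirely.

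Two small repairs would make your argument fully rigorous. First, the Fourier identity needs justification: for an arbitrary (possibly periodic) $\rwD$ the function $1/(1-\widehat{\rwD})$ need not be integrable on $[-\pi,\pi]^d$ and the inversion formula for $\green{\rwD}$ is not immediate; instead work with $G_\lambda:=\sum_{n\ge 0}\lambda^n\,\rwP{\rwD}{0}(S_n=\cdot)$ for $\lambda<1$, where all sums converge absolutely and $v^\top G_\lambda v=(2\pi)^{-d}\int_{[-\pi,\pi]^d}|\widehat v(\xi)|^2\operatorname{Re}\,(1-\lambda\widehat{\rwD}(\xi))^{-1}\,d\xi\ge 0$, then let $\lambda\uparrow 1$ by monotone convergence of the (finitely many) entries $G_\lambda(x,y)\uparrow\green{\rwD}(x,y)$; your displayed formula also omits the factor $(2\pi)^{-d}$, harmless since only the sign matters. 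Second, since the lemma allows \emph{any} distribution on $\mathbb Z^d$, dispose first of the case $\sum_{x,y\in A}\green{\rwD}(x,y)=\infty$ (the right-hand side is then $-\infty$ and there is nothing to prove); on its complement all entries of $G$ on $A\times A$ are finite and the last-exit decomposition you borrow from \Cref{lm:GreenToCap} is legitimate.
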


According to this lemma, the capacity $\capa \gnR[0,n]$ can be bounded below by estimates of $\#\gnR[0,n]$ and the sum of Green's functions.
We start with $\#\gnR[0,n]$. Let
\[
L^x_n:=\sum_{i=0}^n\mathbf 1_{\braces*{\brwNd{\seq{i}}=x}},\qquad \forall x\in\mathbb Z^d,\,n\ge 0,
\]
denote the local times, then we can write the range as
\[R[0,n]=\setof{x\in\mathbb Z^d}{{L^x_n\ge 1}}.\]
The following second moment estimate for local times is inspired by the proof of Le Gall and Lin \cite[Lemma 3]{LeGall-Lin-lowdim}.

\begin{lemma}\label{pp:caplocal}
Let $d\ge 3$. With the conditions in \eqref{assumption}, $\qP{\brwCh,\brwD}$-almost surely, as $n \to \infty$, 
\[
\sum_{x\in\mathbb Z^d} (L^x_n)^2\le  n^{\max(\frac{8-d}{4}, 1)+o_{\tt as}(1)}. 
\]
\end{lemma}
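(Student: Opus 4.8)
The plan is to recognize $\sum_{x}(L^x_n)^2$ as a self-intersection count and then control its first moment. Expanding the square,
\[
\sum_{x\in\mathbb Z^d}(L^x_n)^2=\sum_{i,j=0}^n \mathbf 1_{\{\brwNd{\seq i}=\brwNd{\seq j}\}}.
\]
Since the tree and the edge-increments are independent, conditionally on $\tree^{IGW}$ the difference $\brwNd{\seq i}-\brwNd{\seq j}$ is a sum of $\dist(\seq i,\seq j)$ i.i.d.\ $\brwD$-increments (with signs, which are immaterial by symmetry of $\brwD$), hence distributed as $\rwS{\dist(\seq i,\seq j)}$ under $\rwP{\brwD}{0}$. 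Writing $p_k:=\rwP{\brwD}{0}(\rwS k=0)$, this gives
\[
\hdE\Big[\sum_{x}(L^x_n)^2\,\Big|\,\tree^{IGW}\Big]=\sum_{i,j=0}^n p_{\dist(\seq i,\seq j)}.
\]
Because $\brwD$ is symmetric, irreducible and of finite variance, the walk is genuinely $d$-dimensional and the classical return-probability bound (local CLT, e.g.\ \cite{Lawler-book-RW}) yields $p_k\le C_5(1+k)^{-d/2}$ for all $k\ge 0$.

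Next I would pass to a first-moment estimate on the event $F_\varepsilon(n)$, which by \Cref{lem:max_dist} holds for all large $n$ almost surely, and on which every distance $\dist(\seq i,\seq j)$ is at most $2n^{1/2+\varepsilon}$. Grouping pairs by the common value $k=\dist(\seq i,\seq j)$ and combining the bound on $p_k$ with \Cref{lm:count_pair}, I obtain
\[
\hdE\Big[\sum_x (L^x_n)^2\,\mathbf 1_{F_\varepsilon(n)}\Big]\le C_6\sum_{k=0}^{\lfloor 2n^{1/2+\varepsilon}\rfloor}(1+k)^{-d/2}\Big[(k+1)^2 n^{\frac12+\varepsilon}+C_4(k+1)n^{1+2\varepsilon}\Big].
\]
For $d\in\{3,4,5\}$ the exponents $2-\tfrac d2$ and $1-\tfrac d2$ are handled by comparison with an integral: the first bracket contributes $n^{\frac{8-d}{4}+O(\varepsilon)}$, while the second contributes $n^{\max(\frac{8-d}{4},1)+O(\varepsilon)}$, so that
\[
\hdE\Big[\sum_x (L^x_n)^2\,\mathbf 1_{F_\varepsilon(n)}\Big]\le n^{\max(\frac{8-d}{4},1)+C_7\varepsilon}.
\]

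To upgrade this to an almost sure bound I would use Markov's inequality together with Borel--Cantelli along a polynomially growing subsequence. Writing $\alpha:=\max(\frac{8-d}{4},1)$, Markov gives $\qP{\brwCh,\brwD}\big(\sum_x(L^x_n)^2\mathbf 1_{F_\varepsilon(n)}\ge n^{\alpha+2C_7\varepsilon}\big)\le n^{-C_7\varepsilon}$; choosing $n_m:=\lfloor m^q\rfloor$ with $q$ so large that $qC_7\varepsilon>1$ makes these probabilities summable, so almost surely $\sum_x(L^x_{n_m})^2\mathbf 1_{F_\varepsilon(n_m)}\le n_m^{\alpha+2C_7\varepsilon}$ for all large $m$. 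Since $n\mapsto\sum_x(L^x_n)^2$ is nondecreasing, $n_{m+1}/n_m\to 1$, and $F_\varepsilon(n)$ holds eventually, interpolating between consecutive $n_m$ yields $\sum_x(L^x_n)^2\le n^{\alpha+3C_7\varepsilon}$ for all large $n$; letting $\varepsilon$ run over a sequence decreasing to $0$ then finishes the proof.

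The main obstacle is the first-moment computation, where the two terms furnished by \Cref{lm:count_pair} compete. The ``first-case'' term $(k+1)^2n^{1/2+\varepsilon}$, coming from pairs whose common ancestor lies on the spine, always produces the exponent $\frac{8-d}{4}$; the ``second-case'' term $(k+1)n^{1+2\varepsilon}$, coming from off-spine common ancestors (of which there are of order $n$), produces $\max(\frac{8-d}{4},1)$, the extra ``$1$'' matching the trivial lower bound $\sum_x(L^x_n)^2\ge\sum_x L^x_n=n+1$. Taking the larger of the two exponents is exactly what forces the threshold $\max(\frac{8-d}{4},1)$, with the phase transition occurring at $d=4$. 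The return-probability estimate and the Borel--Cantelli/monotonicity upgrade are both standard and should present no real difficulty.
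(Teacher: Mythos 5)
Your proposal is correct, and it rests on the same three pillars as the paper's proof: a first-moment bound for the self-intersection count on the event $F_\varepsilon(n)$, obtained from \Cref{lm:count_pair} combined with the local-CLT return estimate $(1+k)^{-d/2}$, followed by the Markov/Borel--Cantelli/monotonicity upgrade along a subsequence. Where you genuinely diverge is in the intermediate computation, and your route is cleaner. The paper fixes a site $x$, decomposes over the position $y$ of the common ancestor $u_i\wedge u_j$, and estimates $\sum_y \pi_a(y)\pi_b(x-y)\pi_c(x-y)$ by splitting into $|y|\ge |x|/2$ and $|y|<|x|/2$ (the terms $A$ and $B$), which manufactures the spatial decay $(1+|x|/2)^{-d}$; it must then sum over $|x|\le n$ and invoke \Cref{cor:max_v} to discard sites with $|x|>n$ before concluding, cf.\ \eqref{2ndmoment}. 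You instead expand $\sum_x (L^x_n)^2=\sum_{i,j}\mathbf 1_{\{\brwNd{\seq i}=\brwNd{\seq j}\}}$ and sum over $x$ at the outset, which collapses the convolution exactly: $\sum_x\sum_y \pi_a(y)\pi_b(x-y)\pi_c(x-y)=\sum_z\pi_b(z)\pi_c(z)=p_{b+c}$ by symmetry of $\brwD$, i.e.\ your $p_{\dist(\seq i,\seq j)}$. This buys you three simplifications at once: no $A$/$B$ split, no truncation $|x|\le n$ and hence no appeal to \Cref{cor:max_v} within this lemma, and — since on $F_\varepsilon(n)$ every distance satisfies $\dist(\seq i,\seq j)\le 2n^{1/2+\varepsilon}$, so \Cref{lm:count_pair} covers every relevant $k$ — no separate crude bound for the tail $k>\sqrt n$ as in the paper. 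Two small points to tidy up: the lemma is stated for all $d\ge 3$, not only $d\in\{3,4,5\}$, but your integral comparison yields $\max(\frac{8-d}{4},1)$ verbatim for every $d\ge 3$, the logarithmic factors arising at $d=4$ and $d=6$ being absorbed into $n^{O(\varepsilon)}$; and your bound $p_k\le C(1+k)^{-d/2}$ is precisely the paper's estimate \eqref{eq:pi2}, which (as the paper remarks) remains valid as an upper bound even when $\brwD$ is periodic, so no aperiodicity assumption is silently needed.
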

\begin{proof} Let $\varepsilon \in (0, \frac14)$ and recall the event $F_\varepsilon(n)$ defined in \eqref{Fn}. We are going to prove that for all $n\ge 1$, \begin{equation} \label{2ndmoment}\sum_{x\in\mathbb Z^d, |x|\le n} \qE{\brwCh,\brwD} [(L^x_n)^2\, 1_{F_\varepsilon(n)}]\le C_5 \,  n^{\max(\frac{8-d}{4}, 1)+ 4 \varepsilon}. 
\end{equation}

Admitting for the moment \eqref{2ndmoment} we can give the proof of Lemma \ref{pp:caplocal}. Let $\xi_n:=  \sum_{x\in\mathbb Z^d, |x|\le n}  (L^x_n)^2$,  $\gamma:= \max(\frac{8-d}{4}, 1) + 5\varepsilon$ and $n_j:= 2^j$ for $j\ge 1$. By Markov's inequality, \eqref{2ndmoment} implies that for all $j\ge 1$, $$  \qP{\brwCh,\brwD} 
\big( \xi_{n_j} \ge n_{j-1}^\gamma, F_\varepsilon(n_j)\big) \le C_5 \frac{n_j^{\gamma-\varepsilon}}{n_{j-1}^\gamma} \le C_6 \, 2^{-\varepsilon j}.$$

\noindent The Borel-Cantelli lemma says that almost surely for all large $j$, either $\xi_{n_j} < n_{j-1}^\gamma$ or $F_\varepsilon(n_j)$ does not hold. However by  Lemma \ref{lem:max_dist}, almost surely $F_\varepsilon(n_j)$ holds for all large $j$, hence we have proved that almost surely for all large $j$,   $\xi_{n_j} < n_{j-1}^\gamma$.  On the other hand, by Corollary \ref{cor:max_v}, 
almost surely for all large $j$, $L_{n_j}^x = 0$ for all $|x|> n_j$, hence $\sum_{x\in\mathbb Z^d}  (L^x_{n_j})^2 = \xi_{n_j} < n_{j-1}^\gamma$. Then by monotonicity for all large $n$, $\sum_{x\in\mathbb Z^d}  (L^x_{n})^2   < n^\gamma$ a.s. Since $\varepsilon$ can be arbitrarily small, we have proved Lemma \ref{pp:caplocal}.

It remains to show \eqref{2ndmoment}. To this end,   we denote the transition probabilities for a $\brwD$-walk  $(S_n)_{n\ge0}$ by
\[
\pi_m(x):=\rwP{\brwD}{0}(S_m=x), \qquad m\ge0, x\in \mathbb Z^d, 
\]
for simplicity.
Then there exists a constant $C_7>0$ depending on $d$ and $\theta$ such that for  all $ x\in\mathbb Z^d$ and  $m\ge 0$, 
\begin{align}
\label{eq:pi1}
&\pi_m(x)\le C_7\, (1+|x|)^{-d},\\
\label{eq:pi2}
&\pi_m(x)\le C_7\, (1+ m)^{-\frac{d}{2}},\\
\label{eq:pi3}
&\sum_{x\in\mathbb Z^d}\pi_m(x)=1.\end{align}
where \eqref{eq:pi1} follows from \cite[Proposition 2.4.6]{Lawler-book-RW}, and \eqref{eq:pi2} follows from \cite[p.24]{Lawler-book-RW}. (In \cite{Lawler-book-RW}, $\brwD$ is also required to be aperiodic, but since we only need an upper bound, these results can be easily extended to periodic cases.)
Then we decompose the second moment in \eqref{2ndmoment} as
\[
\sum_{x\in\mathbb Z^d, |x|\le n}\qE{\brwCh,\brwD}\bracks*{(L^x_n)^2 \, 1_{F_\varepsilon(n)}}=\sum_{x\in\mathbb Z^d, |x|\le n} \sum_{i,j=0}^n\hdP\Big(\brwNd{\seq{i}}=\brwNd{\seq{j}}=x, F_\varepsilon(n)\Big).
\]

For notational brevity,  we write $\gnNd=\gnNd_i\wedge \gnNd_j$ for the youngest common ancestor of $\gnNd_i,\gnNd_j$, and $y=\brwNd{\gnNd}$ for the spatial location of $\gnNd$. We also write $a=\dist(\varnothing,\gnNd)$, $b=\dist(\gnNd,\gnNd_i)$ and $c=\dist(\gnNd,\gnNd_j)$ for the graph distances between these particles, as shown in \Cref{fig3}.
\begin{figure}[ht]
\centering
\includegraphics[scale=0.5]{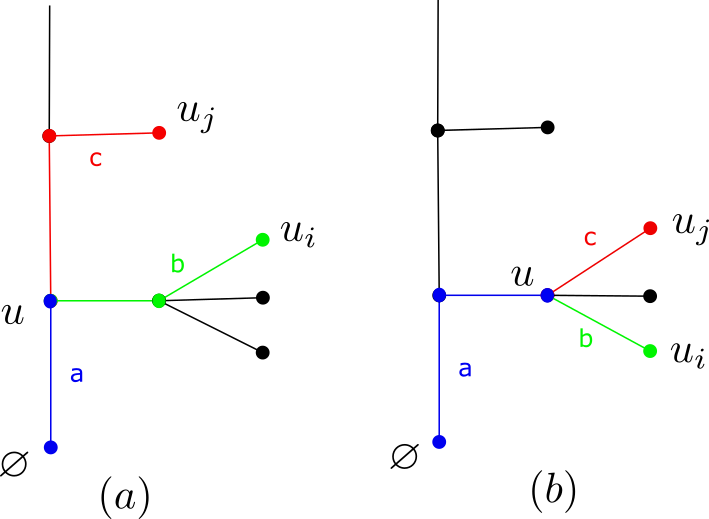}
\caption{\leftskip=1.8truecm \rightskip=1.8truecmAn illustration for the relative positions of $u,u_i,u_j$.}\label{fig3}
\end{figure}

We assume without loss of generality that $b\ge c$, then 
\begin{equation}\label{eq:dist}
b\ge \frac{1}{2}\dist(\gnNd_i,\gnNd_j).
\end{equation}

\noindent Therefore (keeping in mind that $a, b, c$ depend on $u_i, u_j$), 
\begin{align*}
&\qE{\brwCh,\brwD}\bracks*{(L^x_n)^2 \, 1_{F_\varepsilon(n)}}\\
=&\sum_{i,j=0}^n\hdP(\brwNd{\seq{i}}=\brwNd{\seq{j}}=x, F_\varepsilon(n))\\
=&\sum_{i,j=0}^n\sum_{y\in\mathbb Z^d}\hdP(\brwNd{\gnNd}=y,\brwNd{\seq{i}}=\brwNd{\seq{j}}=x, F_\varepsilon(n))\\
=&\hdE\bracks*{\sum_{i,j=0}^n\sum_{y\in\mathbb Z^d}\pi_a(y)\pi_b(x-y)\pi_c(x-y) \, 1_{F_\varepsilon(n)}}\\
=&A+B, 
\end{align*}

\noindent where \begin{eqnarray*}
A &:=&\hdE\bracks*{\sum_{i,j=0}^n\sum_{|y|\ge |x|/2}\pi_a(y)\pi_b(x-y)\pi_c(x-y)\, 1_{F_\varepsilon(n)}},
\\
B&:=&\hdE\bracks*{\sum_{i,j=0}^n\sum_{|y|< |x|/2}\pi_a(y)\pi_b(x-y)\pi_c(x-y)\, 1_{F_\varepsilon(n)}} .\end{eqnarray*}

For $A$, we use \eqref{eq:pi1} for $\pi_a$, \eqref{eq:pi2} and \eqref{eq:dist} for $\pi_b$ and \eqref{eq:pi3} for $\pi_c$, then
\begin{align*}
A&\le C_7^2\, \hdE\bracks*{\sum_{i,j=0}^n\pars*{1+\frac{|x|}{2}}^{-d}\pars*{1+\frac{1}{2}\dist({\seq{i}},{\seq{j}})}^{-\frac{d}{2}}\sum_{y\in\mathbb Z^d}\pi_c(x-y) \, 1_{F_\varepsilon(n)}}\\
&=C_7^2\, \hdE\bracks*{\sum_{i,j=0}^n\pars*{1+\frac{|x|}{2}}^{-d}\pars*{1+\frac{1}{2}\dist({\seq{i}},{\seq{j}})}^{-\frac{d}{2}} \, 1_{F_\varepsilon(n)}} \\
&= C_7^2\, \pars*{1+\frac{|x|}{2}}^{-d}
\sum_{k= 0}^\infty \pars*{1+\frac{k}{2}}^{-\frac{d}{2}}
\hdE\bracks*{\#\setof{0\le i,j\le n}{\dist({\seq{i}},{\seq{j}})=k} \, 1_{F_\varepsilon(n)}}.
\end{align*}

\noindent 
Note that 
\begin{eqnarray*}
&&\sum_{k=0}^\infty\pars*{1+\frac{k}{2}}^{-\frac{d}{2}}
\hdE\bracks*{\#\setof{0\le i,j\le n}{\dist({\seq{i}},{\seq{j}})=k} \, 1_{F_\varepsilon(n)}}
\\
&\le&
\sum_{0\le k \le  n^{\frac12}}\pars*{1+\frac{k}{2}}^{-\frac{d}{2}}
\hdE\bracks*{\#\setof{0\le i,j\le n}{\dist({\seq{i}},{\seq{j}})=k} \, 1_{F_\varepsilon(n)}} +
\pars*{1+\frac12 n^{\frac12}}^{-\frac{d}{2}} n^2,
\end{eqnarray*}
 
\noindent which by  \Cref{lm:count_pair},  is  further bounded by \begin{eqnarray*} \sum_{0\le k \le  n^{\frac12}} \pars*{1+\frac{k}{2}}^{-\frac{d}{2}} \big[(k+1)^2 n^{\frac12+\varepsilon}+ C_4 (k+1) n^{1+2\varepsilon}\big] + \pars*{1+ \frac12 n^{\frac12}}^{-\frac{d}{2}} n^2
\le C_8\, n^{\max(\frac{8-d}{4}, 1)+ 3\varepsilon }.
\end{eqnarray*}

  Therefore we have proved that for any $x\in \mathbb Z^d$ and $n\ge1$, $$A \le C_7^2\, C_8\, \pars*{1+\frac{|x|}{2}}^{-d} n^{\max(\frac{8-d}{4}, 1)+ 3\varepsilon } .$$

We may deal with the term $B$ in a similar way. If $|y|<\frac{|x|}{2}$, then $|x-y|\ge\frac{|x|}{2}$, so we use \eqref{eq:pi3} for $\pi_a$, \eqref{eq:pi2} and \eqref{eq:dist} for $\pi_b$ and \eqref{eq:pi1} for $\pi_c$,
\begin{align*}
B&\le C_7^2\,  \hdE\bracks*{\sum_{i,j=0}^n\sum_{y\in\mathbb Z^d}\pi_a(y)\pars*{1+\frac{1}{2}\dist({\seq{i}},{\seq{j}})}^{-\frac{d}{2}}\pars*{1+\frac{|x|}{2}}^{-d} \, 1_{F_\varepsilon(n)}}\\
&=C_7^2\, \hdE\bracks*{\sum_{i,j=0}^n\pars*{1+\frac{|x|}{2}}^{-d}\pars*{1+\frac{1}{2}\dist({\seq{i}},{\seq{j}})}^{-\frac{d}{2}} \, 1_{F_\varepsilon(n)}}\\
&\le C_7^2\, C_8\, \pars*{1+\frac{|x|}{2}}^{-d} n^{\max(\frac{8-d}{4}, 1)+ 3\varepsilon }.
\end{align*}

\noindent Then for any $x\in \mathbb Z^d$, we have
\[
\hdE\bracks*{(L^x_n)^2 \, 1_{F_\varepsilon(n)}}=A+B\le 2 C_7^2\, C_8\, \pars*{1+\frac{|x|}{2}}^{-d} n^{\max(\frac{8-d}{4}, 1)+ 3\varepsilon }.
\]
Taking the sum over $|x|\le n$ gives \eqref{2ndmoment}. This completes the proof of  Lemma \ref{pp:caplocal}. \end{proof}

From this lemma we deduce an almost-sure lower bound for $\#\gnR[0,n]$:
\begin{proposition}\label{pp:caprange}
For $d\ge 3$, let $\brwCh,\brwD$ be probability distributions with the conditions in \eqref{assumption}, then $\qP{\brwCh,\brwD}$-almost surely  for all large $n$, 
\[\#\gnR[0,n]\ge n^{\min(\frac{d}{4}, 1)+o_{\tt as}(1)}.\]
\end{proposition}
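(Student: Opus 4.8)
The plan is to recover a lower bound on the number of distinct visited points from the already-established upper bound on the second moment of the local times, via a single application of the Cauchy--Schwarz inequality. The key observation is that the total mass of the local-time field is deterministic: since each index $i\in\{0,\dots,n\}$ contributes one visit, $\sum_{x\in\mathbb Z^d}L^x_n=n+1$. Writing $\#\gnR[0,n]=\#\{x:L^x_n\ge 1\}$ and applying Cauchy--Schwarz to the sum restricted to visited points,
\[
(n+1)^2=\Big(\sum_{x:L^x_n\ge 1}L^x_n\Big)^2\le \#\gnR[0,n]\cdot\sum_{x\in\mathbb Z^d}(L^x_n)^2,
\]
so that $\#\gnR[0,n]\ge (n+1)^2\big/\sum_{x}(L^x_n)^2$.

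Next I would feed in \Cref{pp:caplocal}, which gives, $\qP{\brwCh,\brwD}$-almost surely for all large $n$, the bound $\sum_{x}(L^x_n)^2\le n^{\max(\frac{8-d}{4},1)+o_{\tt as}(1)}$. Substituting this into the previous display yields, almost surely for all large $n$,
\[
\#\gnR[0,n]\ge n^{\,2-\max(\frac{8-d}{4},1)+o_{\tt as}(1)}.
\]
It then remains only to simplify the exponent. Since $2-\max(\frac{8-d}{4},1)=\min\!\big(2-\frac{8-d}{4},\,2-1\big)=\min(\frac d4,1)$ (the two branches agreeing at the threshold $d=4$), we obtain exactly $\#\gnR[0,n]\ge n^{\min(\frac d4,1)+o_{\tt as}(1)}$, as claimed.

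There is essentially no remaining analytic obstacle here: the entire difficulty has been front-loaded into the second-moment estimate \Cref{pp:caplocal}, whose proof already handles the subtree-counting (via \Cref{lm:count_pair}), the transition-probability bounds, and the almost-sure upgrade through a dyadic Borel--Cantelli argument. The one point that warrants care is simply that the conclusion inherits the almost-sure character of \Cref{pp:caplocal} directly, with no further probabilistic input needed, so the $o_{\tt as}(1)$ terms compose correctly and the bound holds simultaneously for all large $n$. The only mild subtlety worth double-checking is the exponent bookkeeping at $d=4$, where $\min(\frac d4,1)=1$ matches both branches, confirming that the stated bound is continuous across the critical dimension.
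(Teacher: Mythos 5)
Your proof is correct and follows exactly the paper's own argument: the identity $\sum_{x}L^x_n=n+1$, Cauchy--Schwarz to get $\#\gnR[0,n]\ge (n+1)^2/\sum_x (L^x_n)^2$, and then \Cref{pp:caplocal}, with the exponent simplification $2-\max(\frac{8-d}{4},1)=\min(\frac d4,1)$ carried out correctly.
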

\begin{proof}
By definition,
\[
\sum_{x\in \gnR[0,n]}L^x_n=n+1,
\]
then by Cauchy-Schwarz' inequality,
\[
\#\gnR[0,n] \ge \frac{(n+1)^2}{\sum_{x\in\mathbb Z^d}(L^x_n)^2}.
\]
 We conclude by  Lemma \ref{pp:caplocal}.\end{proof}

\begin{lemma}\label{lm:green_upper}
For $d=3,4,5$, let $\brwCh,\brwD,\rwD$ be probability distributions with the conditions in \eqref{assumption}, then $\qP{\brwCh,\brwD}$-almost surely for all large $n$,
\[
\sum_{x,y\in\gnR[0,n]}\green{\rwD}(x,y)\le 
\left\{\begin{array}{ll}
 n^{\frac{5}{4}+o_{\tt as}(1)}, &d=3\\ \\
 n^{\frac{10-d}{4}+o_{\tt as}(1)}, &d=4,5
\end{array}\right..
\]
\end{lemma}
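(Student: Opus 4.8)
The plan is to prove two complementary upper bounds on $\sum_{x,y\in\gnR[0,n]}\green{\rwD}(x,y)$ and to observe that in each dimension $d\in\{3,4,5\}$ the smaller of the two is exactly the claimed bound (indeed $\min(\tfrac{d}{4},1)+\tfrac12$ and $\tfrac{10-d}{4}$ have minimum $\tfrac54,\tfrac32,\tfrac54$ for $d=3,4,5$). Throughout I would use \Cref{Green_asymptotic} in the form $\green{\rwD}(x)\le C(1+|x|)^{-(d-2)}$, which holds since $\rwJ{\rwD}(x)\asymp|x|$ and $\green{\rwD}$ is bounded near the origin, together with the elementary fact that $\sum_{|z|\le R}(1+|z|)^{-(d-2)}$ is of order $R^2$ for every $d\ge3$.

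\emph{Geometric (confinement) bound.} By \Cref{cor:max_v}, almost surely for all large $n$ the whole range lies in a ball $B(0,M)$ with $M=n^{\frac14+o_{\tt as}(1)}$, so that $\#\gnR[0,n]\le\min(n+1,\#B(0,M))=n^{\min(\frac d4,1)+o_{\tt as}(1)}$. For any fixed $x\in\gnR[0,n]$ every $y\in\gnR[0,n]$ satisfies $|x-y|\le 2M$, whence $\sum_{y\in\gnR[0,n]}(1+|x-y|)^{-(d-2)}\le\sum_{|z|\le 2M}(1+|z|)^{-(d-2)}\le CM^2$. Summing over $x\in\gnR[0,n]$ gives the (deterministic, given the event from \Cref{cor:max_v}) estimate $\sum_{x,y\in\gnR[0,n]}\green{\rwD}(x,y)\le C\,\#\gnR[0,n]\,M^2=n^{\min(\frac d4,1)+\frac12+o_{\tt as}(1)}$, i.e. $n^{5/4}$ for $d=3$ and $n^{3/2}$ for $d=4,5$.

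\emph{Averaged (pair-counting) bound.} Since $\green{\rwD}\ge0$ and every point of the range is visited at least once, $\sum_{x,y\in\gnR[0,n]}\green{\rwD}(x,y)\le\sum_{i,j=0}^n\green{\rwD}(\hdX{i},\hdX{j})=:\Sigma_n$. Conditioning on $\tree^{IGW}$, the displacement $\hdX{i}-\hdX{j}$ is a $\brwD$-walk run for $\dist(\seq{i},\seq{j})$ steps, so $\hdE[\green{\rwD}(\hdX{i},\hdX{j})\mid\tree^{IGW}]=g(\dist(\seq{i},\seq{j}))$, where $g(k):=\sum_x\pi_k(x)\green{\rwD}(x)$ and $\pi_k(x):=\rwP{\brwD}{0}(S_k=x)$ as before. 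Splitting the sum defining $g(k)$ at $|x|=\sqrt{k+1}$ — using \eqref{eq:pi2} on $\{|x|\le\sqrt{k+1}\}$ and $\green{\rwD}(x)\le C(k+1)^{-(d-2)/2}$ together with \eqref{eq:pi3} on $\{|x|>\sqrt{k+1}\}$ — yields the dimension-uniform estimate $g(k)\le C(1+k)^{-\frac{d-2}2}$. Since $F_\varepsilon(n)$ is $\tree^{IGW}$-measurable and forces $\dist(\seq i,\seq j)<2n^{\frac12+\varepsilon}$ for all $i,j\le n$, \Cref{lm:count_pair} gives
\[
\hdE\!\left[\Sigma_n\,1_{F_\varepsilon(n)}\right]\le C(n+1)+C\sum_{k=1}^{2n^{1/2+\varepsilon}}(1+k)^{-\frac{d-2}2}\Big[(k+1)^2n^{\frac12+\varepsilon}+(k+1)n^{1+2\varepsilon}\Big].
\]
Evaluating the two resulting sums (the exponents $(6-d)/2$ and $(4-d)/2$ are $>-1$ for $d\le5$, so each is dominated by its top term $K^{(8-d)/2}$, resp. $K^{(6-d)/2}$, with $K=2n^{1/2+\varepsilon}$) gives $\hdE[\Sigma_n1_{F_\varepsilon(n)}]\le C\,n^{\frac{10-d}4+O(\varepsilon)}$.

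Finally I would upgrade this expectation bound to an almost-sure one exactly as in the proof of \Cref{pp:caplocal}: apply Markov's inequality along the dyadic sequence $n_j=2^j$, invoke Borel--Cantelli together with the fact that $F_\varepsilon(n_j)$ holds eventually (\Cref{lem:max_dist}), and interpolate using the monotonicity of $n\mapsto\Sigma_n$, obtaining $\sum_{x,y\in\gnR[0,n]}\green{\rwD}(x,y)\le\Sigma_n\le n^{\frac{10-d}4+o_{\tt as}(1)}$, i.e. $n^{7/4},n^{3/2},n^{5/4}$ for $d=3,4,5$. Taking in each dimension the minimum of the geometric and the averaged bound produces precisely $n^{5/4}$ for $d=3$ and $n^{\frac{10-d}4}$ for $d=4,5$. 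The main obstacle is conceptual rather than computational: neither method is sharp across all three dimensions — the confinement bound wastes the fact that for large $d$ the range no longer fills its confining ball, while the pair-counting bound wastes the slow Green decay that, for small $d$, lets far-apart pairs dominate — so the two estimates must be run in parallel; the most delicate technical points are the dimension-uniform bound $g(k)\le C(1+k)^{-(d-2)/2}$ and the careful tracking of the $\varepsilon$-losses through the dyadic Borel--Cantelli step.
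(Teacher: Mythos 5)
Your proposal is correct and takes essentially the same route as the paper: the paper proves the $d=3$ case by exactly your confinement bound (range inside a ball of radius $n^{1/4+o_{\tt as}(1)}$ via \Cref{cor:max_v}) and the $d=4,5$ cases by exactly your averaged bound (reduce to $\sum_{i,j}\green{\rwD}(\hdX{i},\hdX{j})$, average over the $\brwD$-increments to get $C(1+\dist(\seq i,\seq j))^{-\frac{d-2}{2}}$, apply \Cref{lm:count_pair} on $F_\varepsilon(n)$, and upgrade to an almost-sure statement by the dyadic Borel--Cantelli argument of \Cref{pp:caplocal}). Running both estimates in all three dimensions and taking the minimum is only a cosmetic reorganization of the paper's case split, and your minor variants --- truncating the $k$-sum at $2n^{\frac12+\varepsilon}$ using $F_\varepsilon(n)$ instead of splitting at $\sqrt n$ with a trivial tail, and the explicit split at $|x|=\sqrt{k+1}$ proving $g(k)\le C(1+k)^{-\frac{d-2}{2}}$ where the paper simply invokes the local limit theorem --- are both sound.
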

\begin{remark}
One would expect that the sum of Green's functions is monotone decreasing in $d$ with a unified asymptotic formula. However, in dimension $d=3$, $R[0,n]$ contains considerably less points than that in $d\in\{4,5\}$. Therefore, we have different results and proofs for the case $d=3$ and the case $d\in\{4,5\}$. $\hfill\Box$
\end{remark}
\begin{proof} Let $\varepsilon \in (0, \frac1{12})$ be small.

For $d=3$, by \Cref{cor:max_v}, $\qP{\brwCh,\brwD}$-almost surely for all large $n$,  
$$\sum_{x,y\in\gnR[0,n]}\green{\rwD}(x,y)
 \le \sum_{|x|,|y|\le n^{\frac14+\varepsilon}}\green{\rwD}(x,y) 
 \le C_9\, n^{\frac34+ 3\varepsilon},
%&\lesssim\int_{|x|,|y|\le n^{1/4+o(1)}}(|x-y|\vee 1)^{-1}dxdy,
$$

\noindent where the last inequality follows from the asymptotic behaviors of $G_\eta$ given in   Lemma \ref{Green_asymptotic}. This proved the case $d=3$.

For $d\in \{4,5\}$, recall the event $F_\varepsilon(n)$ defined in \eqref{Fn}.  Since $\sum_{x,y\in\gnR[0,n]}\green{\rwD}(x,y) \le  \sum_{i,j=0}^n \green{\rwD}(\brwNd{\seq{i}},\brwNd{\seq{j}}) $, we have $$\qE{\brwCh,\brwD}\bracks*{\sum_{x,y\in\gnR[0,n]}\green{\rwD}(x,y) \, 1_{F_\varepsilon(n)}} 
\le
\sum_{i,j=0}^n \qE{\brwCh,\brwD}\bracks*{\green{\rwD}(\brwNd{\seq{i}},\brwNd{\seq{j}}) \, 1_{F_\varepsilon(n)}}.$$

\noindent Using \Cref{Green_asymptotic} and the fact that $ \brwNd{\seq{i}}-\brwNd{\seq{j}} $ is distributed as $S_{\dist(u_i, u_j)}$ with $S$ a $\brwD$-random walk independent of $ \dist(u_i, u_j)$, we deduce from the local limit theorem for $S$ that   \begin{align*}
\qE{\brwCh,\brwD}\bracks*{\sum_{x,y\in\gnR[0,n]}\green{\rwD}(x,y) \, 1_{F_\varepsilon(n)}}
\le& C_{10}\, \sum_{i,j=0}^n\, \qE{\brwCh,\brwD}\bracks*{\frac{1}{(1+|\brwNd{\seq{i}}-\brwNd{\seq{j}}|)^{d-2}} \, 1_{F_\varepsilon(n)}}\\
\le & C_{11} \, \sum_{i,j=0}^n \, \qE{\brwCh,\brwD}\bracks*{\frac{1}{(1+\dist({\seq{i}},{\seq{j}}))^{\frac{d-2}{2}}}\, 1_{F_\varepsilon(n)} }\\
=& C_{11} \, {\sum_{k=0}^{\infty}\frac{\qE{\brwCh,\brwD}\bracks*{\#\setof{0\le i,j\le n}{\dist(\seq{i},\seq{j})=k} \, 1_{F_\varepsilon(n)}} }{(1+k)^{\frac{d-2}{2}}}}.
\end{align*}

The above sum over $k$ is less than $${\sum_{0\le k \le \sqrt n}\frac{\qE{\brwCh,\brwD}\bracks*{\#\setof{0\le i,j\le n}{\dist(\seq{i},\seq{j})=k} \, 1_{F_\varepsilon(n)}} }{(1+k)^{\frac{d-2}{2}}}} + \frac{n^2}{(1+ \sqrt n)^{\frac{d-2}{2}}},
$$

\noindent which by \Cref{lm:count_pair} is further bounded by 
$$ \sum_{0\le k \le \sqrt n} \big[(k+1)^{3-\frac{d}2} n^{\frac12+\varepsilon} + C_4 (k+1)^{2-\frac{d}{2}} n^{1+2\varepsilon}\big]+ n^{\frac{10-d}4}
\le C_{12} n^{\frac{10-d}4 +2\varepsilon}.
$$

Then we have shown that for all $n\ge 1$, $$ \qE{\brwCh,\brwD}\Big(\sum_{x,y\in\gnR[0,n]}\green{\rwD}(x,y) \, 1_{F_\varepsilon(n)}\Big)
\le C_{11}\, C_{12} \, n^{\frac{10-d}4 +2\varepsilon}.$$

\noindent Similarly to the proof of Lemma \ref{pp:caplocal}, we use the Borel-Cantelli lemma and the fact that  $F_\varepsilon(n)$ holds eventually for all large $n$ (Lemma \ref{lem:max_dist}), to get that a.s. for all large $n$, $\sum_{x,y\in\gnR[0,n]}\green{\rwD}(x,y) \le n^{\frac{10-d}4 +3\varepsilon}$. Since $\varepsilon$ can be arbitrarily small, we get the Lemma for the case $d\in \{4, 5\}$.  \end{proof}

{\noindent\it Proof of the lower bound in \Cref{thm:mainlow}:}  Let  $d\in \{3,4,5\}$. Let $\varepsilon \in (0, \frac1{12})$ be small. By  \Cref{pp:caprange} and \Cref{lm:green_upper}, we see that $\qP{\brwCh,\brwD}$-almost surely for all large $n$, $\#\gnR[0,n]\ge n^{\min(\frac{d}{4}, 1) -\varepsilon}$, and $\sum_{x,y\in\gnR[0,n]}\green{\rwD}(x,y)\le 
\left\{\begin{array}{ll}
 n^{\frac{5}{4}+ \varepsilon}, &d=3\\   n^{\frac{10-d}{4}+\varepsilon}, &d=4,5
\end{array}\right..$

Applying  \Cref{capA} to $A=\gnR[0,n]$ with $k= \lfloor 2 n^{\frac12 + 2 \varepsilon}\rfloor$ if $d=3$ and $k=\lfloor 2 n^{\frac{6-d}4+ 2 \varepsilon}\rfloor$ if $d\in \{4, 5\}$, we get that  $\qP{\brwCh,\brwD}$-almost surely for all large $n$,  $\capa \gnR[0,n]\ge \frac15\, n^{\frac{d-2}{4}-3 \varepsilon}$. Since $\varepsilon$ can be arbitrarily small,  this gives the  lower bound in \Cref{thm:mainlow}. $\hfill\Box$

\end{document}